\newcommand{\abs}[1]{\ensuremath{|#1|}}
\newcommand{\Abs}[1]{\ensuremath{\left|#1\right|}}
\newcommand{\norm}[2]{\ensuremath{|\!|#1|\!|_{#2}}}
\newcommand{\Norm}[2]{\ensuremath{\left|\!\left|#1\right|\!\right|_{#2}}}
\newcommand{\id}{\ensuremath{\mathds{1}}}
\newcommand{\cB}{\mathcal{B}}
\newcommand{\cM}{\mathcal{M}}
\def\beq{\begin{equation}}
\def\eeq{\end{equation}}
\def\bq{\begin{quote}}
\def\eq{\end{quote}}
\def\ben{\begin{enumerate}}
\def\een{\end{enumerate}}
\def\bit{\begin{itemize}}
\def\eit{\end{itemize}}
\def\ra{\rightarrow}
\def\lb{\left(}
\def\rb{\right)}
\def\lset{\lbrace}
\def\rset{\rbrace}
\def\l|{\left|}
\def\r|{\right|}
\def\lbr{\left[}
\def\rbr{\right]}
\def\one{\id}
\newcommand\C{\mathbbm{C}}
\newcommand{\brac}[1]{\left(#1\right)}
\newcommand{\Ibrac}[1]{\left[#1\right]}
\newcommand{\polys}{\mathfrak{P}}
\newcommand{\blaschkes}{\mathfrak{B}}
\newcommand{\disc}{\mathds{D}}
\theoremstyle{plain}
\newtheorem{lemma}{Lemma}
\newtheorem{theorem}[lemma]{Theorem}
\newtheorem{corollary}[lemma]{Corollary}
\theoremstyle{definition}
\begin{document}
\title{Spectral Variation Bounds in Hyperbolic Geometry}
\author{Alexander M\"uller-Hermes}
\ead{muellerh@ma.tum.de}
\address{Zentrum Mathematik, Technische Universit\"{a}t M\"{u}nchen, 85748 Garching, Germany}

\author{Oleg Szehr}
\ead{oleg.szehr@posteo.de}
\address{Zentrum Mathematik, Technische Universit\"{a}t M\"{u}nchen, 85748 Garching, Germany}
\address{Centre for Mathematical Sciences, University of Cambridge, Cambridge CB3 0WA, United Kingdom}

\date{\today}
\begin{abstract}
We derive new estimates for distances between optimal matchings of eigenvalues of non-normal matrices in terms of the norm of their difference. We introduce and estimate a hyperbolic metric analogue of the classical spectral-variation distance. The result yields a qualitatively new and simple characterization of the localization of eigenvalues. Our bound improves on the best classical spectral-variation bounds due to Krause if the distance of matrices is sufficiently small and is sharp for asymptotically large matrices. Our approach is based on the theory of model operators, which provides us with strong resolvent estimates. The latter naturally lead to a Chebychev-type interpolation problem with finite Blaschke products, which can be solved explicitly and gives stronger bounds than the classical Chebychev interpolation with polynomials. As compared to the classical approach our method does not rely on Hadamard's inequality and immediately generalizes to algebraic operators on Hilbert space. \hfill \\
\hfill \\
\textbf{Keywords:} Spectral variation bounds, Blaschke products, hyperbolic geometry, resolvent bounds \hfill \\
\hfill \\
2010 MSC 15A60 ,15A18, 15A42, 65F35

\end{abstract}

\maketitle

%
%
%
%
\section{Introduction} 

For arbitrary complex $n\times n$-matrices $A,B\in\cM_n$ we study distances of optimal matchings of their spectra $\sigma(A),\sigma(B)$. The (Euclidean) \emph{optimal matching distance}~\cite{Bhatia1} of two sets $\lset a_i\rset^n_{i=1},\lset b_i\rset^n_{i=1}\subset \C$ is defined as
\begin{align*}
d_E\lb \lset a_i\rset,\lset b_i\rset\rb = \min_{\sigma\in S_n}\max_{1\leq i\leq n} \Abs{a_i - b_{\sigma\lb i\rb}},
\end{align*}
where $S_n$ denotes the group of permutations of $n$ objects. A prototypical spectral variation bound in terms of this distance is of the form
\begin{align}
d_E\lb\sigma\lb A\rb,\sigma\lb B\rb\rb \leq C_n\lb\Norm{A}{}+\Norm{B}{}\rb^{1-\frac{1}{n}}\Norm{A-B}{}^{\frac{1}{n}},
\label{equ:GenBound}
\end{align}
where $C_n$ can only depend on $n$ and $\Norm{\cdot}{}$ denotes the usual operator norm.
Such estimates have been studied in many articles and books over the last decades, see for example
~\cite{Ostrowski,Friedland,Elsner198577,Phillips1990165,MGil,
Bhatia1990195,Krause199473,StewartSun} and references therein. 
Despite considerable effort the best $C_n$ in~\eqref{equ:GenBound} is still not known, the currently best value seems to be $C_n = \frac{16}{3\sqrt{3}}$~\cite{Krause199473}.

In this work we present a new approach to spectral variation estimates and derive new bounds that characterize the localization of spectra of non-normal matrices. We introduce a (pseudo-) hyperbolic analogue of the optimal matching distance and derive estimates on this quantity in terms of $\Norm{A}{}, \Norm{B}{}, \Norm{A-B}{}$ and $n$. These hyperbolic estimates are generally incomparable to
Equation~\eqref{equ:GenBound} meaning that there are cases, where they perform better than the previously known bounds, but also cases where they do not, see Figure~\ref{fig:CompBh}. As it turns out
we can use the hyperbolic estimates to improve the best value for $C_n$, if $\Norm{A-B}{}$ is \emph{small enough} (Corollary \ref{cor:Improved} below). In the limit of large $n$ our $C_n$ approaches $2$, which is optimal.

\begin{figure}
\centering
\begin{minipage}{.5\textwidth}
  \centering
  \includegraphics[width=6.8cm]{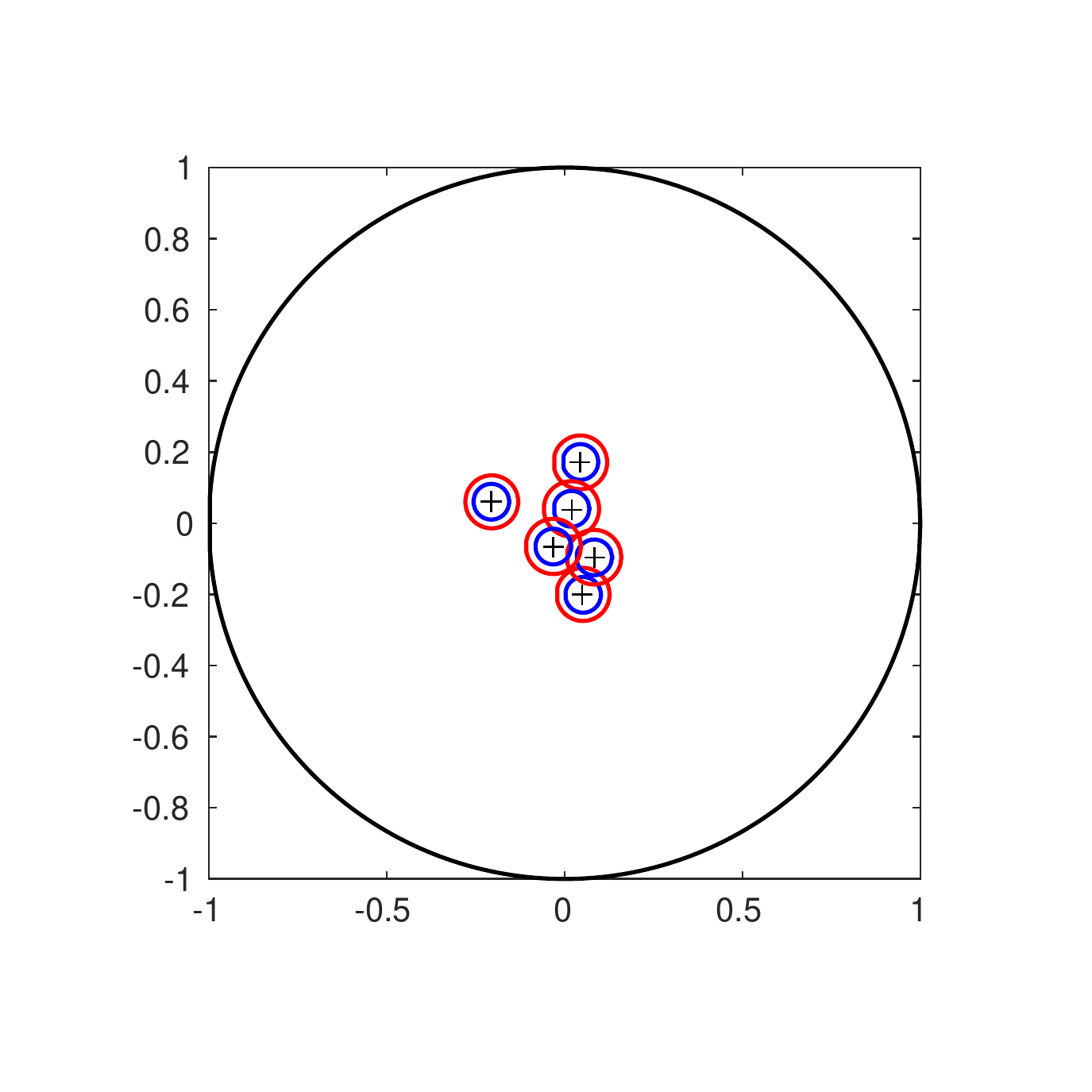}
  \vspace*{-1.5cm}
  \captionof*{figure}{(a) $\norm{A}{}=0.3$}
\end{minipage}%
\begin{minipage}{.5\textwidth}
  \centering
  \includegraphics[width=6.8cm]{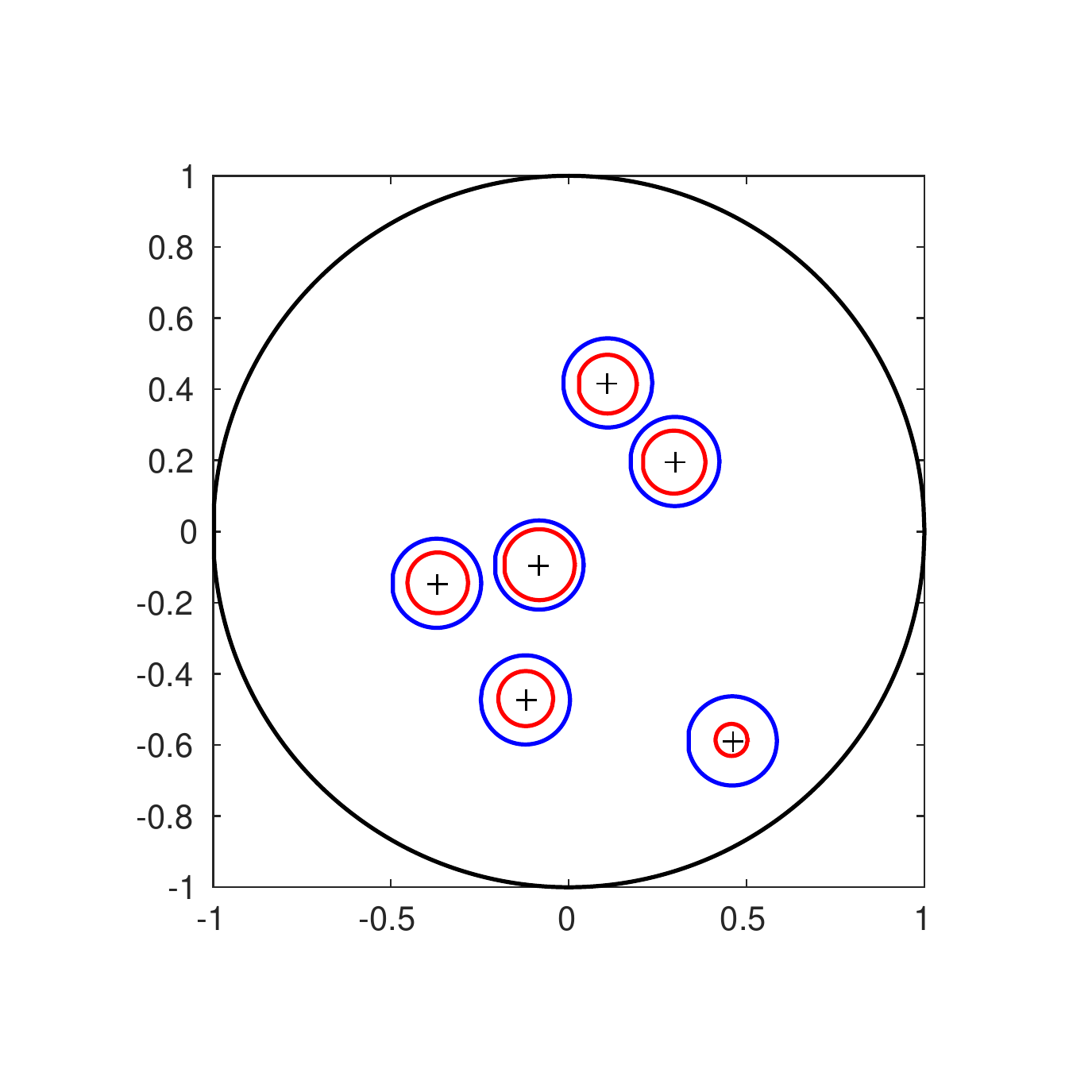}
  \vspace*{-1.5cm}
  \captionof*{figure}{(b) $\norm{A}{}=0.9$}
\end{minipage}
\caption{Localization of eigenvalues of a perturbed $6\times 6$ matrix $B=A+E$ for randomly chosen $A$ and $\norm{E}{}\leq 10^{-10}$ with respect to the spectrum of $A$ (crosses). The circles contain an eigenvalue of $B$. Blue circles are due to the classical estimate \eqref{equ:GenBound} with $C_n = 2^{2-\frac{1}{n}}$, while red circles result from Theorem~\ref{hyper} below.}
\label{fig:CompBh}
\end{figure}

Our argument is guided by a classical approach due to Phillips~\cite{Phillips1990165} that in turn builds on techniques developed by Friedland~\cite{Friedland} and Elsner~\cite{ElsnerAlt}. Phillips reduces the problem of obtaining a good estimate of the form~\eqref{equ:GenBound} to one of \emph{minimizing} the norm of a resolvent along certain paths in the complex plane. The latter can be accomplished using a classical interpolation theorem due to Chebyshev. Phillips' approach was developed further by Bhatia, Elsner and Krause~\cite{Bhatia1990195,Krause199473} who employed a Hadamard-type inequality~\cite{Elsner198577} (equation~\eqref{hada} below) due to Elsner to avoid resolvent estimates. Their approach provided a better estimate for $C_n$. 

On the technical side, our article contains two key innovations to the methods developed in the cited publications. First, we employ recent spectral resolvent estimates~\cite{OlegRes,Nikolski2006} that are stronger than the Hadamard-type inequality~\eqref{hada} used by Bhatia, Elsner and Krause. These resolvent estimates are derived using an interpolation-theoretic approach to eigenvalue bounds introduced in~\cite{Nikolski2006}. Second, our resolvent estimates naturally lead us to a Chebyshev-type interpolation problem with finite Blaschke products that yields quantitatively better estimates as compared to the analogous classical Chebyshev interpolation problem for polynomials. The solution to this interpolation problem was recently provided in~\cite{Tsang2012} in terms of the so called \emph{Chebychev Blaschke products}. Although Chebychev-Blaschke interpolation occurs naturally in our context it turns out that it leads to a rather small numerical advantage and seems to be of rather theoretic interest. Our methods immediately generalize to algebraic operators on Hilbert/Banach space, which allows us to improve on the study of such operators in~\cite{Abdel1,Abdel2}.

This article is structured as follows. In Section~\ref{classm} we recapitulate the classical approach due to Bhatia, Elsner, Krause and Phillips. Section~\ref{basichyper} introduces some basic notation from hyperbolic geometry. In Section~\ref{newm} we state some results from a model operator theoretic approach to spectral estimates as well as the aforementioned theorem about interpolation with finite Blaschke products. Section~\ref{hyperresult} contains our main results. In Section~\ref{discussion} we compare our estimates to the ones of equation~\eqref{equ:GenBound}.

\section{Preliminaries}

\subsection{Classical methods for spectral variation bounds}\label{classm}

The proofs of spectral variation bounds as in formula~\eqref{equ:GenBound} by Bhatia, Elsner, Krause and Phillips as well as our derivation have a similar core. For $A,B\in\cM_n$ the eigenvalues of the convex combination
\begin{align*}
A_t = (1-t)A + tB
\end{align*}
trace $n$ continuous curves in the complex plane as $t$ varies from $0$ to $1$~\cite{Bhatia1}. These curves connect the eigenvalue sets $\sigma(A)$ and $\sigma(B)$ and establish a matching even though they might intersect. It has been shown by Elsner~\cite{Elsner198577,Bhatia1} that if $X,Y\in\cM_n$ and $z$ is an eigenvalue of $Y$, then
\begin{align}
\Abs{\textnormal{det}(z\id-X)}\leq\Norm{X-Y}{}(\Norm{X}{}+\Norm{Y}{})^{n-1}.\label{hada}
\end{align}
It follows that along any particular curve $z:\Ibrac{0,1}\ra\C$ with $z(0)=a\in\sigma\lb A\rb$ and $z(1)=b\in\sigma\lb B\rb$ we have
\begin{align*}
\Abs{\textnormal{det}( z\brac{t}\one - A)} \leq \lb\Norm{A}{}+\Norm{B}{}\rb^{n-1}\Norm{A-B}{}.
\end{align*}
Thus, in order to prove the bound~\eqref{equ:GenBound} it is sufficient to find $t^*\in[0,1]$ such that
\begin{align*}
\frac{1}{(C_n)^n}\Abs{a-b}^n\leq |\textnormal{det}\lb z\brac{t^*}\one - A\rb |
\end{align*}
holds. The validity of the above inequality with $C_n= 2^{2-\frac{1}{n}}$ is ensured by a Chebychev-type interpolation result. If $z$ is a continuous curve with endpoints $a$ and $b$ then
\begin{align}
\min_{p\in\polys_n}\max_{t\in\Ibrac{0,1}}|p(z(t))| \geq \frac{\abs{b-a}^n}{2^{2n-1}},
\label{equ:Cheb}
\end{align}
where $\mathfrak{P}_n$ denotes the set of monic polynomials of degree $n$~\cite[Lemma VIII.1.4]{Bhatia1}.
%
Equation~\eqref{equ:Cheb} follows from the well-known fact, that among the real polynomials of degree $n$ with leading coefficient $1$,
the normalized Chebychev polynomial is the one whose maximal absolute value on the interval $[-1,1]$ is minimal~\cite[p. 31]{Rivlin1981}. We refer to~\cite{Bhatia1}, Chapter~VIII for a more detailed discussion of the above derivation.

In earlier work Phillips~\cite{Phillips1990165} did not employ~\eqref{hada} but instead he relied on a Bauer-Fike estimate~\cite{bauerfiker}, which asserts that for $z\in\sigma\lb Y\rb\setminus\sigma\lb X\rb$ and $X,Y\in\cM_n$ we have
\begin{align}
\frac{1}{\Norm{X-Y}{}}\leq \Norm{(z\one - X)^{-1}}{}.
\label{equ:inequ}
\end{align}
A suitable estimate for the occurring resolvent and equation~\eqref{equ:Cheb} prove~\eqref{equ:GenBound}. The prefactor $C_n$ obtained by Phillips in this way is however slightly worse. As we shall see (cf.~Section~\ref{newm}), the original estimate~\eqref{equ:inequ} in fact \emph{is stronger} than the inequality in~\eqref{hada} and it is possible to prove~\eqref{equ:GenBound} with $C_n=2^{2-\frac{1}{n}}$ starting from~\eqref{equ:inequ}. To this end we bound the resolvent using advanced methods from the theory of model operators~\cite{Nikolski2006,OlegRes}. These techniques naturally lead us to estimate a hyperbolic analogue of the optimal matching distance and to the more sophisticated interpolation with finite Blaschke products.

\subsection{Basics from Hyperbolic geometry}\label{basichyper}

We denote by $\mathbb{D}$ the open unit disk in the complex plane and by $\overline{\mathbb{D}}$ its closure. For $x,y\in\overline{\mathbb{D}}$ the (pseudo-)hyperbolic distance~\cite{Garnett81} is
\begin{align*}
p(a,b)=\Abs{\frac{a-b}{1-\bar{a}b}}.
\end{align*}
It is not hard to verify that $p$ is symmetric, satisfies a triangle inequality and that $0\leq p(a,b)\leq1$, see \cite{Garnett81}. The \lq\lq hyperbolic disk \rq\rq around $a$ with radius $r\leq1$, i.e.~the set $\{z\:|\: p(a,z)<r\}$, is also an Euclidean disk with center $C$ and radius $R$ given by~\cite[Chapter 1]{Garnett81}
\begin{align}
C=\frac{1-r^2}{1-r^2 \abs{a}^2}a\quad\textnormal{and}\quad R=\frac{1-\abs{a}^2}{1-r^2 \abs{a}^2}r.
\label{equ:HypDisc}
\end{align}
We study optimal matchings of spectra with respect to hyperbolic distance. For two sets $\lset a_i\rset^n_{i=1},\lset b_i\rset^n_{i=1}\subset \mathbb{D}$ we define the hyperbolic optimal matching distance as
\begin{align}
d_H\lb \lset a_i\rset,\lset b_i\rset\rb = \min_{\sigma\in S_n}\max_{1\leq i\leq n} \Abs{\frac{a_i - b_{\sigma\lb i\rb}}{1-\bar{a}_ib_{\sigma\lb i\rb}}},
\label{hyperdist}
\end{align}
where $S_n$ denotes the group of permutations of $n$ objects. We assume that $A,B\in\cM_n$ have spectra $\sigma\lb A\rb,\sigma\lb B\rb\subset \overline{\mathbb{D}}$, which can always be achieved by a suitable normalization. If $d_H\lb\sigma\lb A\rb,\sigma\lb B\rb\rb$ is bounded by $r$ it follows that in a hyperbolic disk of radius $r$ around an eigenvalue $a$ of $A$ there is an eigenvalue $b$ of $B$. Furthermore, $a$ and $b$ are contained in an Euclidean disk of radius $R$ and center $C$. Thus the hyperbolic estimate entails an Euclidean characterization of the localization of eigenvalues. We will discuss this further in Section~\ref{discussion}.

\subsection{New methods for spectral variation bounds} \label{newm}

This section contains two advanced results that we require for our analysis. The first one allows to estimate norms of rational functions of matrices in terms of their eigenvalues and builds on deep results from harmonic analysis and operator theory. The second one is the aforementioned analogue of the Chebychev interpolation problem for Blaschke products and is rooted in the theory of elliptic functions.

Consider a matrix $A\in\cM_n$ with $\norm{A}{}\leq1$ and minimal polynomial $m=\prod_{i=1}^{\abs{m}}(z-\lambda_i)$, where $\abs{m}$ denotes the degree of $m$. The problem of finding a spectral estimate on the operator norm of a rational function of $A$ has a complete solution. It is sufficient to consider a certain \emph{model matrix} $M_m$ that is associated to $m$. The latter is a lower-triangular $\abs{m}\times\abs{m}$ matrix and is entry-wise given by
\begin{align}
\left(M_m\right)_{ij}=\begin{cases}\qquad\qquad\qquad{0}\ &if \ i<j\\
\qquad\qquad\qquad\lambda_i\ &if\ i=j\\
(1-\abs{\lambda_i}^2)^{1/2}(1-\abs{\lambda_j}^2)^{1/2}\prod_{\mu=j+1}^{i-1}\left(-\bar{\lambda}_\mu\right)\ &if \ i>j
\end{cases}.\label{modelmat}
\end{align}
The algebraic multiplicity of the eigenvalue $\lambda_i$ of $M_m$ is exactly the number of factors associated to $\lambda_i$ in the minimal polynomial of $A$.

\begin{lemma}[\cite{Nikolski2006,OlegRes}]\label{ratmod} Let $A\in\cM_n$ with operator norm $\norm{A}{}\leq1$ and minimal polynomial $m=\prod_{i=1}^{\abs{m}}(z-\lambda_i)$. Let $\psi$ be a rational function whose set of poles does not intersect $\sigma(A)$. For the associated model matrix~\eqref{modelmat} it holds that
$\Norm{M_m}{}\leq1$ and $\sigma(M_m)=\sigma(A)$ and
$$\norm{\psi(A)}{}\leq\norm{\psi(M_m)}{}.$$
\end{lemma}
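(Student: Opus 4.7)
The plan is to identify $M_m$ as the matrix of a compressed shift on a finite-dimensional model space and then invoke the Sz.-Nagy--Foias functional calculus. First, I would form the finite Blaschke product $\theta=\prod_{i=1}^{\abs{m}} b_{\lambda_i}$ with elementary factors $b_\lambda(z)=(z-\lambda)/(1-\bar\lambda z)$, together with the associated model space $K_\theta=H^2\ominus \theta H^2$. In the Malmquist--Walsh orthonormal basis
\[
e_k(z)=\frac{\sqrt{1-\abs{\lambda_k}^2}}{1-\bar\lambda_k z}\prod_{\mu<k}b_{\lambda_\mu}(z),\qquad k=1,\dots,\abs{m},
\]
a direct computation shows that the compressed shift $T_\theta=P_{K_\theta}M_z|_{K_\theta}$ has entries in this basis that coincide exactly with the matrix~(\ref{modelmat}). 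Since $T_\theta$ is a compression of the isometry of multiplication by $z$ on $H^2$, it is a contraction, giving $\norm{M_m}{}\leq 1$.

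Second, the lower-triangular structure of $M_m$ immediately yields $\sigma(M_m)=\{\lambda_1,\dots,\lambda_{\abs{m}}\}$; matching algebraic multiplicities of each $\lambda_i$ in $M_m$ with those in $m$ one verifies that the minimal polynomial of $M_m$ equals $m$, so $\theta$ is the characteristic function of the completely non-unitary contraction $T_\theta$, and in particular $\sigma(M_m)=\sigma(A)$. The heart of the argument is then the norm inequality, which I would obtain from Sz.-Nagy--Foias functional calculus combined with Sarason's theorem: for any Hilbert space contraction $T$ annihilated by $\theta$ and any $f\in H^\infty(\mathbb{D})$ one has
\[
\norm{f(T)}{}\;\leq\;\inf_{g\in H^\infty}\norm{f+\theta g}{\infty},
\]
with equality for $T=T_\theta=M_m$. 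Applied to $T=A$, which satisfies $\theta(A)=0$ because $m(A)=0$ and all $\abs{\lambda_i}\leq 1$, this yields $\norm{f(A)}{}\leq\norm{f(M_m)}{}$ for every $f\in H^\infty$.

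Finally, a rational $\psi$ whose poles avoid $\sigma(A)=\sigma(M_m)\subset\overline{\mathbb{D}}$ is reduced to the $H^\infty$ case by clearing the poles $\{z_j\}\subset\mathbb{D}\setminus\sigma(A)$ with the elementary Blaschke factors $b_{z_j}$: multiplication by a suitable $\prod_j b_{z_j}^{n_j}$ turns $\psi$ into an element of $H^\infty$, and the factor is invertible on both $\sigma(A)$ and $\sigma(M_m)$, so the $H^\infty$ bound transfers to $\psi$ itself. The main technical obstacle is the equality clause in the second paragraph, namely that $M_m$ actually \emph{attains} the $H^\infty/\theta H^\infty$ quotient norm; this is precisely Sarason's theorem and requires nontrivial dilation-theoretic input. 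For the present paper I would invoke \cite{Nikolski2006,OlegRes} for this step while carrying out the Malmquist--Walsh identification in the first paragraph and the pole-clearing reduction in the third paragraph explicitly.
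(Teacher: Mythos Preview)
The paper does not give its own proof of this lemma; it quotes it from \cite{Nikolski2006,OlegRes} and remarks that it is a consequence of Sarason's interpolation theorem or the Nagy--Foia\c{s} commutant lifting theorem. Your first two paragraphs reconstruct exactly that route---identifying $M_m$ with the compressed shift $T_\theta$ on $K_\theta=H^2\ominus\theta H^2$ via the Malmquist--Walsh basis, and then invoking Sarason to get $\norm{f(A)}{}\leq\inf_{g\in H^\infty}\norm{f+\theta g}{\infty}=\norm{f(M_m)}{}$ for $f\in H^\infty$---and this part is correct and matches what the cited references do.

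Your third paragraph, however, has a gap. Writing $f=\psi\cdot B$ with $B=\prod_j b_{z_j}^{n_j}$ indeed places $f$ in $H^\infty$ and gives $\norm{f(A)}{}\leq\norm{f(M_m)}{}$, but this does not pass back to $\psi=fB^{-1}$: you would need $\norm{f(A)B(A)^{-1}}{}\leq\norm{f(M_m)B(M_m)^{-1}}{}$, and since $B^{-1}\notin H^\infty$ you have no comparison between $\norm{B(A)^{-1}}{}$ and $\norm{B(M_m)^{-1}}{}$. Mere invertibility of $B$ on the common spectrum is not enough to ``transfer'' the bound. The clean fix avoids $H^\infty$ for this step altogether: since $A$ and $M_m$ share the minimal polynomial $m$, every rational $\psi$ with poles off $\sigma(A)$ coincides, as an operator function, with its Hermite interpolant $r$ of degree $<\abs{m}$ at the zeros of $m$ (equivalently, $q$ is invertible in $\C[z]/(m)$ when $\psi=p/q$), so $\psi(A)=r(A)$ and $\psi(M_m)=r(M_m)$. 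The polynomial case from your second paragraph then yields $\norm{\psi(A)}{}=\norm{r(A)}{}\leq\norm{r(M_m)}{}=\norm{\psi(M_m)}{}$ directly.
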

This lemma is a consequence of an interpolation-theoretic approach to eigenvalue estimates, which has been established in~\cite{Nikolski2006}. The occurrence of model matrices can be seen as a result of Sarason's approach to interpolation theory~\cite{Sarason} or the commutant lifting theorem of Nagy-Foia\c{s}~\cite{NFinter,FFinter}. Note that the above estimate is achieved by $M_m$ and hence Lemma~\ref{ratmod} provides a complete solution to the problem of finding a spectral bound to $\norm{\psi(A)}{}$.
It is not hard to verify that Lemma~\ref{ratmod} implies that for any $X\in\cM_n$ with $\Norm{X}{}\leq1$ and minimal polynomial $m=\prod_{i=1}^{\abs{m}}(z-\lambda_i)$ it holds that~\cite[Theorem 3.12]{Nikolski2006}
\begin{align}
\Norm{X^{-1}}{}\leq\prod_{i=1}^{\abs{m}}\frac{1}{\abs{\lambda_i}}.\label{inverse}
\end{align}

In order to prove a hyperbolic spectral variation estimate we will replace the Chebychev-type interpolation result~\eqref{equ:Cheb} by an interpolation theorem for finite Blaschke products. This result heavily relies on the theory of Jacobi Theta functions, cf.~\cite{Tsang2012}. We abstain from going into details of this theory and instead we just define the corresponding functions on a restricted domain. For some $q\in[0,1)$ we set
\begin{align*}
\vartheta_2\brac{q} := \sum^\infty_{k=-\infty} q^{\brac{k+\frac{1}{2}}^2}\qquad\textnormal{and}\qquad
\vartheta_3\brac{q} := \sum^\infty_{k=-\infty} q^{k^2}.
\end{align*}
For $q\in (0,1)$ we define the elliptic modulus as
\begin{align}
k\brac{q} := \brac{\frac{\vartheta_2\brac{q}}{\vartheta_3\brac{q}}}^2.
\label{equ:k}
\end{align}
By continuous extension we set $k(0):=0$ and $k(1):=1$ and note that $k(q)$ is strictly increasing in $q$ ~\cite[Section 21.7]{WatsonWhit}. 
A finite Blaschke product is a product of the form
\begin{align}
B(z)=\prod_{i=1}^n\frac{z-\lambda_i}{1-\bar{\lambda}_i z},\label{Blascker}
\end{align}
where $\{\lambda_i\}_{i=1,...,n}\subset\overline{\mathbb{D}}$. 
\begin{lemma}[\cite{Tsang2012}, p. 32]
Let $\blaschkes_n$ denote the set of finite Blaschke products of the form of Equation~\eqref{Blascker} with coefficients $\{\lambda_i\}_{i=1,...,n}\subset\disc$. We have that 
\begin{align*}
\min_{B_n\in\blaschkes_n}\max_{z\in\Ibrac{-\sqrt{k\brac{q}},\sqrt{k\brac{q}}}} |B_n\brac{z}| = \sqrt{k\brac{q^n}},
\end{align*}
for $q\in\lbr 0,1\rbr$ and where $k(q)$ is as in Equation~\eqref{equ:k}.
\label{lem:BlaschkeInt}
\end{lemma}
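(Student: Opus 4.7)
The plan is to mirror the classical proof of Chebyshev's extremal theorem on $[-1,1]$, replacing the trigonometric parametrization $z=\cos\theta$ by an elliptic parametrization involving Jacobi's $\mathrm{sn}$ function with modulus $k(q)$. The role played by the angle-multiplication identity $\cos(n\theta)=T_n(\cos\theta)$ is played here by Jacobi's $n$-th degree modular transformation, which exhibits $\mathrm{sn}(nu,k(q^n))$ as a rational expression in $\mathrm{sn}(u,k(q))$; after an appropriate normalization this rational expression turns out to be precisely a finite Blaschke product of degree $n$, the \emph{Chebyshev-Blaschke product} $T_n^q$.

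The first concrete step is therefore to construct $T_n^q\in\blaschkes_n$ explicitly from theta functions and verify three properties: (i) $T_n^q$ has all its zeros in $\disc$ (so it genuinely lies in $\blaschkes_n$), (ii) $|T_n^q(z)|=1$ on $\partial\disc$, and (iii) on the real interval $[-\sqrt{k(q)},\sqrt{k(q)}]$ one has an identity of the form $T_n^q\bigl(\sqrt{k(q)}\,\mathrm{sn}(u,k(q))\bigr)=\sqrt{k(q^n)}\,\mathrm{sn}(nu,k(q^n))$, whose right-hand side oscillates between $\pm\sqrt{k(q^n)}$ as $u$ runs over the relevant real period. This immediately yields the upper bound $\max_{z\in[-\sqrt{k(q)},\sqrt{k(q)}]}|T_n^q(z)|=\sqrt{k(q^n)}$ and, more importantly, produces $n+1$ equioscillation points at which $T_n^q$ attains the extreme values with alternating signs.

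The second step is the matching lower bound, which is the standard Chebyshev alternation argument adapted to Blaschke products. If some $B\in\blaschkes_n$ satisfied $\max_{[-\sqrt{k(q)},\sqrt{k(q)}]}|B|<\sqrt{k(q^n)}$, then the difference $T_n^q-B$ would change sign at each of the $n+1$ equioscillation points of $T_n^q$, yielding at least $n$ real zeros on the interval. Clearing the common denominator $\prod_{i=1}^n(1-\bar\lambda_i z)(1-\bar\mu_i z)$ turns $T_n^q-B$ into a polynomial of degree at most $2n$ whose conjugate-symmetric Blaschke structure already forces $2n$ zeros off the real interval (paired as $z,1/\bar z$), leaving no room for further real zeros and producing a contradiction. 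The main technical obstacle lies in the first step: the explicit identification of the normalized $n$-th degree modular transformation of $\mathrm{sn}$ with a finite Blaschke product of degree \emph{exactly} $n$. This is where the Jacobi theta-function identities defining $k(q)$ and the transformation formulas under $q\mapsto q^n$ carry the real weight, and it is essentially the content of the construction in \cite{Tsang2012} to which the lemma is attributed.
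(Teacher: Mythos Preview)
The paper does not prove this lemma at all; it is quoted from \cite{Tsang2012} and the only comment following it is that the extremal Blaschke product is constructed explicitly there and called the \emph{Chebyshev Blaschke product}. So there is no proof in the paper to compare against, and your sketch is an attempt to supply what the paper merely cites.

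Your upper-bound plan is correct in outline: the degree-$n$ modular transformation of $\mathrm{sn}$ does produce a Blaschke product $T_n^q\in\blaschkes_n$ satisfying the identity $T_n^q\bigl(\sqrt{k(q)}\,\mathrm{sn}(u,k(q))\bigr)=\sqrt{k(q^n)}\,\mathrm{sn}(nu,k(q^n))$, and the equioscillation of $\mathrm{sn}$ on the real line gives $n+1$ alternation points with extreme value $\sqrt{k(q^n)}$. This is exactly the construction in the cited thesis.

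Your lower-bound argument, however, has two genuine gaps. First, the alternation step (``$T_n^q-B$ changes sign at the $n+1$ equioscillation points'') presupposes that the competitor $B$ is real-valued on the real interval, which fails for Blaschke products with non-real zeros; you need a preliminary reduction showing an extremal $B$ may be taken with real zeros, and you do not provide one. Second, and more seriously, the zero-count does not yield a contradiction. The numerator of $T_n^q-B$ over the common denominator is a polynomial of degree at most $2n$. The ``conjugate-symmetric Blaschke structure'' does not furnish $2n$ a~priori zeros off the interval; what the symmetry $f(1/\bar z)=1/\overline{f(z)}$ actually gives is that the $n$ real zeros $z_j\in(-\sqrt{k(q)},\sqrt{k(q)})$ found by alternation are paired with $n$ further real zeros $1/z_j$ outside the closed unit disk. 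That totals $2n$ zeros of a polynomial of degree at most $2n$, which is perfectly consistent and not a contradiction. To close the argument one needs either an additional zero (for instance via a careful endpoint/degree analysis showing the leading coefficient vanishes or an extra alternation point exists) or a different device altogether, such as a Kolmogorov-type characterization of best uniform approximation in this setting or the Zolotarev-problem machinery used in \cite{Tsang2012}. As written, the lower bound is not established.
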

In~\cite{Tsang2012} an optimal Blaschke product achieving the optimization in Lemma~\ref{lem:BlaschkeInt} for given $n$ is given explicitly and referred to as the \emph{Chebychev Blaschke product}.

\section{Spectral variation bounds}\label{hyperresult}

\subsection{Euclidean spectral variation bounds}

We begin with a new proof of inequality~\eqref{hada} based on inequality~\eqref{equ:inequ}. We improve~\eqref{hada} in that that the degree $\abs{m}$ of the minimal polynomial of $A\in\cM_n$ occurs instead of the dimension $n$. This leads to the following improvement of the Euclidean spectral variation bound from~\cite{Bhatia1990195}.

\begin{theorem}[Euclidean spectral variation]\label{euclid}
Let $A,B\in\cM_n$ and let $\abs{m}$ denote the degree of the minimal polynomial of $A$. Then
\begin{align*}
d_E\lb\sigma\lb A\rb,\sigma\lb B\rb\rb \leq 2^{2-\frac{1}{\abs{m}}}\lb\Norm{A}{}+\Norm{B}{}\rb^{1-\frac{1}{\abs{m}}}\Norm{A-B}{}^{\frac{1}{\abs{m}}}.
\end{align*}
\end{theorem}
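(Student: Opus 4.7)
The plan is to follow the classical template of Bhatia--Elsner--Krause--Phillips (recalled in Section~\ref{classm}), but to replace Elsner's Hadamard-type determinantal inequality~\eqref{hada} with the sharper resolvent estimate~\eqref{inverse} derived from the model-operator theory. The effect is that the characteristic polynomial of $A$ (of degree $n$) is replaced throughout by the minimal polynomial $m_A$ of $A$ (of degree $|m|$), which is precisely the improvement asserted in the theorem.

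As in the classical approach, parametrize $A_t=(1-t)A+tB$ so that the $n$ eigenvalues of $A_t$ trace continuous curves in $\C$ joining $\sigma(A)$ at $t=0$ to $\sigma(B)$ at $t=1$. Fix such a curve $z:[0,1]\to\C$ with $z(0)=a\in\sigma(A)$ and $z(1)=b\in\sigma(B)$. For $t\in(0,1]$ with $z(t)\notin\sigma(A)$, the Bauer--Fike estimate~\eqref{equ:inequ} applied with $X=A$, $Y=A_t$ gives the lower bound
\begin{equation*}
\|(z(t)\one-A)^{-1}\|\geq \frac{1}{\|A-A_t\|}=\frac{1}{t\|A-B\|}.
\end{equation*}
For the reverse direction, set $c_t=\|z(t)\one-A\|$ and apply~\eqref{inverse} to $X_t=c_t^{-1}(z(t)\one-A)$. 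Since translation and scaling preserve the degree of the minimal polynomial, $X_t$ has norm $1$ and a minimal polynomial of degree $|m|$ whose roots are $(z(t)-\mu_i)/c_t$, where $\mu_i$ enumerate the roots of $m_A$ counted with multiplicity. Rescaling gives
\begin{equation*}
\|(z(t)\one-A)^{-1}\|\leq \frac{c_t^{|m|-1}}{\prod_{i=1}^{|m|}|z(t)-\mu_i|} = \frac{c_t^{|m|-1}}{|m_A(z(t))|}.
\end{equation*}
Combining the two resolvent estimates and controlling $c_t\leq\|A\|+\|B\|$ (see below) produces the $|m|$-analogue of~\eqref{hada}:
\begin{equation*}
|m_A(z(t))|\leq t\|A-B\|\,c_t^{|m|-1}\leq \|A-B\|(\|A\|+\|B\|)^{|m|-1}.
\end{equation*}

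The final step is the polynomial Chebyshev interpolation inequality~\eqref{equ:Cheb}: for the monic degree-$|m|$ polynomial $m_A$ and any continuous path from $a$ to $b$, there exists $t^*\in[0,1]$ with $|m_A(z(t^*))|\geq|b-a|^{|m|}/2^{2|m|-1}$. Chaining with the upper bound at $t^*$ yields
\begin{equation*}
|b-a|^{|m|}\leq 2^{2|m|-1}\|A-B\|(\|A\|+\|B\|)^{|m|-1},
\end{equation*}
i.e., $|b-a|\leq 2^{2-1/|m|}(\|A\|+\|B\|)^{1-1/|m|}\|A-B\|^{1/|m|}$. Since the pair $(a,b)$ was an arbitrary endpoint pair of one of the continuous eigenvalue curves and the same argument applies to every one of them, the bound holds for every pair of an optimal matching, proving the theorem.

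The main obstacle is the uniform control $c_t\leq\|A\|+\|B\|$ needed in the crucial bound above. The triangle estimate $c_t\leq|z(t)|+\|A\|\leq(2-t)\|A\|+t\|B\|$ directly gives $c_t\leq\|A\|+\|B\|$ in the regime $\|A\|\leq\|B\|$, but can overshoot in the opposite regime. I would resolve the latter by a preliminary scalar shift $A\mapsto A-\alpha\one$, $B\mapsto B-\alpha\one$---which preserves $d_E(\sigma(A),\sigma(B))$, $\|A-B\|$, and $|m|$---possibly combined with the symmetric use of Bauer--Fike with the roles of $A$ and $B$ exchanged, exploiting the fact that the Chebyshev extremum is not attained at $t=0$ (where $m_A(z(0))=m_A(a)=0$).
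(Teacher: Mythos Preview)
Your approach is precisely the paper's: combine the Bauer--Fike lower bound~\eqref{equ:inequ} with the model-operator resolvent estimate~\eqref{inverse} (applied to the normalized matrix $c_t^{-1}(z(t)\id-A)$) to obtain the $|m|$-analogue of Elsner's inequality~\eqref{hada},
\[
\prod_{i=1}^{|m|}|z-\lambda_i|\;\leq\;\|X-Y\|\,(\|X\|+\|Y\|)^{|m|-1}\qquad (z\in\sigma(Y)),
\]
and then feed this into the Chebyshev path argument~\eqref{equ:Cheb}. The paper's proof is terser: after displaying the chain of inequalities above it simply writes ``Apart from this the proof \ldots\ follows the line of~\cite{Bhatia1990195}'', so you have in fact spelled out more than the paper does.

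You are also right that the uniform bound $c_t=\|z(t)\id-A\|\leq\|A\|+\|B\|$ is not automatic: applying the improved Hadamard inequality with $X=A$, $Y=A_t$ gives only $|z(t)|\leq\|A_t\|\leq(1-t)\|A\|+t\|B\|$ and hence $c_t\leq(2-t)\|A\|+t\|B\|$, which can exceed $\|A\|+\|B\|$ when $\|A\|>\|B\|$; indeed $t\bigl((2-t)\|A\|+t\|B\|\bigr)^{|m|-1}$ need not be bounded by $(\|A\|+\|B\|)^{|m|-1}$ on $[0,1]$. This subtlety lives in the part the paper delegates to~\cite{Bhatia1990195}, so it is not a defect of your adaptation per se.

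However, your proposed repairs do not close the gap. A scalar shift $A\mapsto A-\alpha\id$, $B\mapsto B-\alpha\id$ preserves $d_E$, $\|A-B\|$ and $|m|$ but changes $\|A\|+\|B\|$ on the right-hand side you are trying to prove; there is in general no $\alpha$ with both $\|A-\alpha\id\|\leq\|B-\alpha\id\|$ and $\|A-\alpha\id\|+\|B-\alpha\id\|\leq\|A\|+\|B\|$ (take $A=\mathrm{diag}(2,-2)$, $B=\mathrm{diag}(1,-1)$). Exchanging the roles of $A$ and $B$ replaces the degree of $m_A$ by that of $m_B$, which is not the asymmetric claim. If you want a fully self-contained argument, the honest route is to replace $\|A\|+\|B\|$ by $2M_2:=2\max(\|A\|,\|B\|)$: then $c_t\leq|z(t)|+\|A\|\leq\|A_t\|+\|A\|\leq 2M_2$ holds for all $t$ and your proof goes through verbatim, yielding the bound with $(2M_2)^{1-1/|m|}$---the form the paper itself uses in Corollary~\ref{kaputze}.
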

\begin{proof}
We strengthen the Hadamard type inequality~\eqref{hada} (see~\cite[Section 2]{Elsner198577} for the original derivation). For any $X,Y\in\cM_n$ combining inequalities~\eqref{equ:inequ} and~\eqref{inverse} yields
\begin{align*}
\frac{1}{\Norm{X-Y}{}}\leq\Norm{(z\id-X)^{-1}}{}\leq\frac{\Norm{z\id-X}{}^{\abs{m}-1}}{\prod_{i=1}^{\abs{m}}\abs{z-\lambda_i}}\leq\frac{(\Norm{X}{}+\Norm{Y}{})^{\abs{m}-1}}{\prod_{i=1}^{\abs{m}}\abs{z-\lambda_i}},
\end{align*}
where $\abs{m}$ denotes the degree of the minimal polynomial of $X$. Apart from this the proof of Theorem~\ref{euclid} follows the line of~\cite{Bhatia1990195}.
\end{proof}
In~\cite{Abdel1} spectral variation bounds for algebraic elements of unital Banach algebras were studied. Our estimate also holds for algebraic operators on Hilbert spaces~\cite{Nikolski2006} and is stronger than~\cite[Theorem 2.2.52]{Abdel1}. For algebraic elements of general Banach algebras, one can improve~\cite[Theorem 2.2.52]{Abdel1} using~\cite[Theorem~3.20]{Nikolski2006} instead of~\eqref{inverse} and the same derivation as above.

\subsection{Hyperbolic spectral variation bounds}
In this section we prove our main result, which is a spectral variation estimate
for non-normal matrices in hyperbolic geometry.

\begin{theorem}[Hyperbolic spectral variation]\label{hyper}
Let $A,B\in\cM_n$ with $\Norm{A}{},\Norm{B}{}<1$, let $\abs{m}$ denote the degree of the minimal polynomial of $A$ and let $\rho(B)\leq\Norm{B}{}$ denote the spectral radius of $B$. Then
\begin{align*}
d_H\lb\sigma\lb A\rb,\sigma\lb B\rb\rb &\leq  k\left(k^{-1}\left(\frac{\Norm{A-B}{}^2}{\lb 1-\rho\lb  B\rb\Norm{A}{}\rb^2}\right)^{\frac{1}{2\abs{m}}}\right)\\
&\leq
\frac{2^{2-\frac{1}{\abs{m}}}}{\lb 1-\rho\lb  B\rb\Norm{A}{}\rb^{\frac{1}{\abs{m}}}}\Norm{A-B}{}^{\frac{1}{\abs{m}}},
\end{align*}
where $d_H$ is defined in \eqref{hyperdist} and $k$ is defined in \eqref{equ:k}.
\end{theorem}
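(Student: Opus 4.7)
The plan is to mirror the classical Phillips/Friedland/Elsner strategy used in the proof of Theorem~\ref{euclid}, but with two substitutions: the resolvent is bounded via the model-operator machinery of Lemma~\ref{ratmod} rather than via Hadamard's inequality~\eqref{hada}, and the Chebychev polynomial interpolation~\eqref{equ:Cheb} is replaced by the Chebychev-Blaschke interpolation of Lemma~\ref{lem:BlaschkeInt}. Since $\Norm{A}{},\Norm{B}{}<1$, the convex combination $A_t=(1-t)A+tB$ also satisfies $\Norm{A_t}{}<1$, so the eigenvalue curves of $A_t$ stay in $\disc$ and induce a matching between $\sigma(A)$ and $\sigma(B)$. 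It therefore suffices to bound $p(a,b)$ along one such curve $z:[0,1]\to\disc$ with $z(0)=a\in\sigma(A)$ and $z(1)=b\in\sigma(B)$.

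For any $t\in(0,1]$ with $z(t)\notin\sigma(A)$, Bauer-Fike~\eqref{equ:inequ} yields $\Norm{(z(t)\id-A)^{-1}}{}\geq(t\Norm{A-B}{})^{-1}$, while applying Lemma~\ref{ratmod} to $\psi(w)=(z(t)-w)^{-1}$ together with the explicit lower-triangular form~\eqref{modelmat} of $M_m$ produces a hyperbolic refinement of~\eqref{inverse} of the form
\begin{equation*}
\Norm{(z(t)\id-A)^{-1}}{} \leq \frac{1}{\abs{B_A(z(t))}\,\lb 1-\rho(B)\Norm{A}{}\rb},
\end{equation*}
where $B_A(w)=\prod_{i=1}^{\abs{m}}(w-\lambda_i)/(1-\bar{\lambda}_i w)$ is the finite Blaschke product whose zeros are the roots $\lambda_i$ of the minimal polynomial of $A$. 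The point is that the denominator $\prod_i(1-\bar{\lambda}_i z(t))$ appearing in $1/\abs{B_A(z(t))}$ plays the role of the boundary normalization produced by the off-diagonal entries $(1-\abs{\lambda_i}^2)^{1/2}(1-\abs{\lambda_j}^2)^{1/2}$ of $M_m$, and each factor is bounded below by $1-\rho(B)\Norm{A}{}$ using $\abs{\lambda_i}\leq\Norm{A}{}$ and a spectral-radius control on the evaluation point $z(t)$. Combining the two inequalities and taking the maximum over $t\in[0,1]$ gives
\begin{equation*}
\max_{t\in[0,1]}\abs{B_A(z(t))} \leq \frac{\Norm{A-B}{}}{1-\rho(B)\Norm{A}{}}.
\end{equation*}

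It remains to match this against a \emph{lower} bound provided by Lemma~\ref{lem:BlaschkeInt}. Pre-composing with the Möbius transformation $\phi_a(w)=(w-a)/(1-\bar{a}w)$ sends the curve $z([0,1])$ to a curve in $\disc$ from $0$ to $\phi_a(b)$, with $\abs{\phi_a(b)}=p(a,b)$, and the composition $B_A\circ\phi_a^{-1}$ is again a finite Blaschke product of degree $\abs{m}$. A conformal / harmonic-measure reduction (analogous to the interval-to-curve extension used in the Euclidean case) transfers the maximum along this curve to the real interval $[-\sqrt{k(q)},\sqrt{k(q)}]$ with $q=k^{-1}\brac{p(a,b)^2}$, and Lemma~\ref{lem:BlaschkeInt} then forces
\begin{equation*}
\max_{t\in[0,1]}\abs{B_A(z(t))} \geq \sqrt{k\brac{\lb k^{-1}(p(a,b)^2)\rb^{\abs{m}}}}.
\end{equation*}
Combining the upper and lower estimates and solving for $p(a,b)$ produces the first (sharp) bound of the theorem; the second bound follows from an elementary estimate of $k$ by a power function, mirroring the role of the Chebychev factor $2^{2-1/\abs{m}}$ in~\eqref{equ:Cheb}.

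The main obstacle I anticipate is establishing the resolvent bound with the precise $(1-\rho(B)\Norm{A}{})$ prefactor: a crude application of~\eqref{modelmat} together with the estimate $\abs{1-\bar{\lambda}_i z(t)}\geq 1-\Norm{A}{}\Norm{B}{}$ is too weak, and extracting the stronger $\rho(B)\Norm{A}{}$ factor requires either exploiting the cancellation between the diagonal and off-diagonal entries of $M_m$ via the Blaschke normalization, or swapping the roles of $A$ and $B$ in the Bauer-Fike step so that the evaluation point is pinned by $\sigma(B)$ rather than by $\Norm{A_t}{}$. A secondary subtlety is the reduction of the Chebychev-Blaschke interpolation on an arbitrary curve in $\disc$ to the real interval of Lemma~\ref{lem:BlaschkeInt}; this is a standard but not entirely trivial conformal-invariance argument.
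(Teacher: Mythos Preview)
Your overall strategy (eigenvalue curves $+$ Bauer--Fike $+$ model-operator resolvent bound $+$ Chebychev--Blaschke interpolation $+$ simplification) is exactly the paper's. However, both technical steps you flag as obstacles are carried out differently in the paper, and your sketches for them do not work as stated.

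\textbf{The resolvent bound.} Your proposed derivation via the explicit entries of $M_m$ (``each factor $|1-\bar\lambda_i z(t)|$ is bounded below by $1-\rho(B)\|A\|$'') would produce $|m|$ factors of $(1-\rho(B)\|A\|)^{-1}$, not one, and hence the wrong exponent. The paper's trick is cleaner: write
\[
(z\id-A)^{-1}=\big[(z\id-A)^{-1}(\id-\bar z A)\big]\cdot(\id-\bar z A)^{-1},
\]
bound the second factor by a single $(1-|z|\,\|A\|)^{-1}$, and observe that the first factor equals $\phi_z(A)^{-1}$ for the M\"obius map $\phi_z(w)=(z-w)/(1-\bar z w)$. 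Since $\|\phi_z(A)\|\leq 1$ (this is the Nagy--Foia\c{s} contraction fact), inequality~\eqref{inverse} applies to $\phi_z(A)$ and its eigenvalues $\phi_z(\lambda_i)$, giving exactly $1/|B_A(z)|$. No entrywise computation on $M_m$ is needed.

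\textbf{The interpolation reduction.} Your formula $\max_t|B_A(z(t))|\geq\sqrt{k\big((k^{-1}(p(a,b)^2))^{|m|}\big)}$ does not solve to the theorem's first inequality; it corresponds to reducing to the symmetric interval $[-p(a,b),p(a,b)]$ with a degree-$|m|$ Blaschke product, which is not what the geometry gives. After M\"obius and perpendicular projection onto the geodesic through $a,b$ (contractivity is the content of the Appendix lemma), the curve covers only the \emph{one-sided} interval $[0,p(a,b)]$. The paper then uses the substitution $s\mapsto s^2$ to pass to the symmetric interval $[-\sqrt{p(a,b)},\sqrt{p(a,b)}]$ at the price of \emph{doubling} the Blaschke degree to $2|m|$, and applies Lemma~\ref{lem:BlaschkeInt} there. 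This yields $\sqrt{k(q^{2|m|})}$ with $k(q)=p(a,b)$, which is precisely what inverts to the stated bound $k\big(k^{-1}(\Delta^2)^{1/(2|m|)}\big)$. The squaring/degree-doubling step is the piece your ``conformal/harmonic-measure reduction'' is missing.
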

The assumption $\Norm{A}{},\Norm{B}{}<1$ is not principal as it can be achieved by a suitable normalization, cf.~Section~\ref{discussion}.
Note that due to the definition of $k$ and $k^{-1}$ in terms of power series, their values can be efficiently computed using mathematical software~\cite[Section XXI.21.8]{WatsonWhit}. Hence, the first bound in the theorem can be computed without any difficulty. The second inequality provides a simplification of the first estimate and is proven in Lemma~\ref{hasest} below. 

To prove Theorem~\ref{hyper}, we first provide a natural hyperbolic analogue of the interpolation result~\eqref{equ:Cheb} (Lemma VIII.1.4 in~\cite{Bhatia1}).
\begin{lemma}\label{irgendson} Let $z:[0,1]\mapsto\overline{\mathbb{D}}$ be a continuous curve with endpoints $z(0)=a$ and $z(1)=b$ and let $\cB_n$ be the set of finite Blaschke products with coefficients in $\mathbb{D}$. Choose $q\in[0,1]$ such that $k(q)=\Abs{\frac{a-b}{1-\bar{a}b}}$. Then
\begin{align*}
\min_{B_n\in\blaschkes_n}\max_{t\in[0,1]} |B_n\brac{z(t)}| \geq \sqrt{k\brac{q^{2n}}}.
\end{align*}
\end{lemma}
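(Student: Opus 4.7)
The plan is to reduce the general-curve statement to the special case of the real segment and then apply the Chebychev--Blaschke interpolation result, Lemma~\ref{lem:BlaschkeInt}.

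First I would normalize the endpoints by a disk automorphism. Set $s := \sqrt{k(q^2)}$. The ascending Landen transformation for elliptic moduli gives
$$p(-s, s) \;=\; \frac{2s}{1+s^2} \;=\; \frac{2\sqrt{k(q^2)}}{1+k(q^2)} \;=\; k(q) \;=\; p(a,b),$$
so there exists a disk automorphism $\phi$ of $\overline{\mathbb{D}}$ with $\phi(a) = -s$ and $\phi(b) = s$. Since $\phi^{-1}$ is itself a degree-one Blaschke factor, the map $B_n \mapsto B_n \circ \phi^{-1}$ is a bijection of $\blaschkes_n$, and $|B_n(z)| = |(B_n \circ \phi^{-1})(\phi(z))|$. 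Thus both sides of the desired inequality are invariant under composition with $\phi$, and we may assume $z(0) = -s$ and $z(1) = s$.

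Next I would compare the maximum along the curve to the maximum along the geodesic segment $[-s, s]$. Fix $B_n \in \blaschkes_n$ and set $M := \max_t |B_n(z(t))|$. If $z([0,1])$ meets $\partial\mathbb{D}$ then $M = 1 \geq \sqrt{k(q^{2n})}$ and we are done, so assume $z([0,1]) \subset \mathbb{D}$. The continuum $z([0,1])$ lies inside a single connected component of the sublevel set $\{w \in \mathbb{D} : |B_n(w)| \leq M\}$, and a Cartan-type estimate for Blaschke products bounds the pseudo-hyperbolic diameter of any such component purely in terms of $M$ and $n$. Since the relevant component contains two points at pseudo-hyperbolic distance $k(q)$, this diameter estimate forces $M \geq \sqrt{k(q^{2n})}$; in particular $M \geq \max_{x\in[-s,s]}|B_n(x)|$. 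Finally, applying Lemma~\ref{lem:BlaschkeInt} with $q$ replaced by $q^2$ gives
$$\min_{B_n \in \blaschkes_n} \max_{x \in [-s, s]} |B_n(x)| = \sqrt{k(q^{2n})},$$
and combining with the previous step finishes the proof.

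The hard part will be the Cartan-type pseudo-hyperbolic diameter estimate used in the second step. The analogous fact for polynomials---that a continuum in $\mathbb{C}$ joining two points cannot be ``smaller'' than the straight segment for the Chebychev minimax---is what underlies the classical Lemma~VIII.1.4 in \cite{Bhatia1} that yields~\eqref{equ:Cheb}. The hyperbolic counterpart is geometrically natural: the sublevel sets of a degree-$n$ Blaschke product split into at most $n$ components of controlled hyperbolic size, and the geodesic segment between $\pm s$ is the most economical continuum joining them. Implementing this rigorously, however, requires the conformal/potential-theoretic machinery that stands behind the Chebychev--Blaschke constant, and it is the only step where the hyperbolic geometry interacts non-trivially with the rigid structure of Blaschke products.
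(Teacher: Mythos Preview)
Your normalization via a disk automorphism and the Landen identity $k(q)=2\sqrt{k(q^2)}/(1+k(q^2))$ is fine, and your final appeal to Lemma~\ref{lem:BlaschkeInt} with $q\to q^2$ on the interval $[-\sqrt{k(q^2)},\sqrt{k(q^2)}]$ is exactly what the paper does after a change of variables. The problem is the middle step.

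Your step~2 is the entire content of the lemma, and it is not proved. You assert a ``Cartan-type estimate'' to the effect that a connected sublevel set $\{|B_n|\le M\}$ containing two points at pseudo-hyperbolic distance $k(q)$ already forces $M\ge\sqrt{k(q^{2n})}$. But that inequality \emph{is} the statement you are trying to prove, so invoking it is circular. The subsequent claim ``in particular $M\ge\max_{x\in[-s,s]}|B_n(x)|$'' does not follow either: from $M\ge\sqrt{k(q^{2n})}$ you would need $\sqrt{k(q^{2n})}\ge\max_{[-s,s]}|B_n|$, which holds only for the extremal Blaschke product, not for every $B_n$. So the logical chain collapses at this point, and your own closing paragraph concedes that this is exactly where a genuine argument is missing.

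The paper closes this gap with an elementary device you have not used: the \emph{perpendicular projection} $p_\Gamma$ onto the geodesic $\Gamma$ through $a$ and $b$. One shows (Appendix) that $p_\Gamma$ is a contraction for the pseudo-hyperbolic metric, so for every $t$ and every zero $\lambda_i$,
\[
\Abs{\frac{z(t)-\lambda_i}{1-\overline{z(t)}\lambda_i}}\ \ge\ \Abs{\frac{z(t)'-\lambda_i'}{1-\overline{z(t)'}\lambda_i'}},
\]
where primes denote projections. This replaces the arbitrary curve and arbitrary zeros by a curve and zeros \emph{on} $\Gamma$; since the projected curve still joins $a$ and $b$, it covers the geodesic arc between them by the intermediate value theorem. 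After parametrizing $\Gamma$ one lands on a one-dimensional Blaschke minimax on a real interval, and the substitution $s\mapsto s^2$ doubles the degree and produces a symmetric interval, which is your Landen step in disguise. No potential theory or Cartan lemma is needed: the contraction property of perpendicular projection does all the work.
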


\begin{proof}

We would like to apply Lemma~\ref{lem:BlaschkeInt}. For this purpose we consider a hyperbolic geodesic curve $\Gamma$ through the points $a,b\in\disc$. This curve can be parametrized as~\cite[Prop. 2.3.17]{Krantz}
\begin{align*}
s\mapsto\Gamma(s)=\frac{sC(a,b) + a}{1+s\bar{a}C(a,b)},\quad C(a,b):=\frac{b-a}{1-\bar{a}b},
\end{align*}
where $s\in\lb-\abs{\frac{1}{C(a,b)}},\abs{\frac{1}{C(a,b)}}\rb$ and $\Gamma(0)=a$, $\Gamma(1)=b$.
The hyperbolic disk carries a natural perpendicular projection onto $\Gamma$. For any $z\in\mathbb{D}$ there is a unique geodesic through $z$ that orthogonally intersects $\Gamma$. We denote by $z'\in \Gamma$ the point obtained by mapping $z$ along this geodesic onto $\Gamma$ and call it the perpendicular projection of $z$ onto $\Gamma$. By using elementary hyperbolic geometry, see Appendix \ref{sec:Appendix}, one can prove that the hyperbolic pseudo-distance is contractive under this projection, i.e. for any $z,w\in\mathbb{D}$
\begin{align*}
\Abs{\frac{z-w}{1-\bar{z}w}}\geq \Abs{\frac{z'-w'}{1-\overline{z'}w'}}.
\end{align*}
For a Blaschke product $B_n(z(t))$ we have
\begin{align*}
\prod^n_{i=1} \Abs{\frac{z\brac{t}-\lambda_i}{1-\overline{z\brac{t}}\lambda_i}} &\geq \prod^n_{i=1} \Abs{\frac{z\brac{t}'-\lambda'_i}{1-\overline{z\brac{t}'}\lambda'_i}}
= \prod^n_{i=1} \Abs{\frac{\Abs{C(a,b)}\brac{s - s_i}}{1-\Abs{C(a,b)}^2s_i s}}.
\end{align*}
The equality follows from Schwarz-Pick lemma~\cite{Garnett81} by choosing
\begin{align*}
s,s_i\in\lb-\left|\frac{1}{C(a,b)}\right|,\left|\frac{1}{C(a,b)}\right|\rb
\end{align*}
so that $\lambda'_i = \Gamma(s_i)$ and $z\brac{t}' = \Gamma(s)$. Consider now $t$ such that $z\brac{t}' = \Gamma(s)$ for some $s\in\lbr 0,1\rbr$ (i.e.~the perpendicular projection maps $z\brac{t}$ to the geodesic arc between $\Gamma(0)=a$ and $\Gamma(1)=b$). It follows by elementary computation that
\begin{align}
&\min_{s_i\in \lb-\abs{\frac{1}{C(a,b)}},\abs{\frac{1}{C(a,b)}}\rb} \max_{s\in \lbr 0,1\rbr} \Abs{\prod^{n}_{i=1}\frac{\Abs{C(a,b)}\brac{s-s_i}}{1-\Abs{C(a,b)}^2ss_i}} \\
&~= \min_{s_i\in \lb -1,1\rb} \max_{s\in\Ibrac{0,\Abs{C(a,b)}}}\Abs{\prod^{n}_{i=1}\frac{s-s_i}{1-ss_i}}\label{pitty}\\
& ~= \min_{s_i\in \lb -1,1\rb} \max_{s\in\Ibrac{-\sqrt{\Abs{C(a,b)}},\sqrt{\Abs{C(a,b)}}}} \Abs{\prod^{n}_{i=1}\frac{s^2-s_i}{1-s^2s_i}}\nonumber\\
&~= \min_{s_i\in \lb -1,1\rb} \max_{s\in\Ibrac{-\sqrt{\Abs{C(a,b)}},\sqrt{\Abs{C(a,b)}}}} \prod^{n}_{i=1}\brac{\Abs{\frac{s-\sqrt{s_i}}{1-s\sqrt{s_i}}}\Abs{\frac{s+\sqrt{s_i}}{1+s\sqrt{s_i}}}}\nonumber.
\end{align}
We can now choose $q\in[0,1]$ with $k(q)=\Abs{C(a,b)}$ and apply Lemma~\ref{lem:BlaschkeInt} to the final Blaschke product of degree $2n$. We conclude that the last term is always bounded from below by $\sqrt{k\brac{q^{2n}}}$.
\end{proof}
%
%
%
%
%
%
%
%
%
%
%
%
%
%
%
%
%
%
%
%
%
%
%
%
%
%
In Lemma~\ref{irgendson} writing out the right-hand side gives $$\sqrt{k\left(k^{-1}\left(\Abs{\frac{a-b}{1-\bar{a}b}}\right)^{2n}\right)},$$ which depends on $\Abs{\frac{a-b}{1-\bar{a}b}}$ in a complicated way. To obtain a simplified expression we might use Lemma~\ref{hasest} below. However, applying Lemma~\ref{hasest} goes at the price of losing the advantage due to Lemma~\ref{lem:BlaschkeInt} in the final bound of Theorem \ref{hyper}. In particular the estimate on the right-hand side of the lemma below can be derived from ordinary Chebychev interpolation~\eqref{equ:Cheb} without relying on Lemma~\ref{lem:BlaschkeInt}, see Remark~\ref{vomTeufel} at the end of the proof of Theorem~\ref{hyper}.
\begin{lemma}\label{hasest}
For the elliptic modulus $k$ as defined in (\ref{equ:k}) and $q\in[0,1]$ we have
\begin{align*}
\sqrt{k\brac{q^n}} \geq \frac{1}{2^{n-1}}\brac{\sqrt{k\brac{q}}}^n.
\end{align*}
\end{lemma}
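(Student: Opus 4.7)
My plan is to first reduce the inequality to a cleaner statement about an infinite product associated with $\sqrt{k(q)}$, and then to establish the reduced inequality by a monotonicity argument based on a careful pairing of terms. Starting from the Jacobi infinite product representations (see e.g.~\cite[Section 21.3]{WatsonWhit})
\begin{equation*}
\vartheta_2(q) = 2q^{1/4}\prod_{n\ge 1}(1-q^{2n})(1+q^{2n})^2, \qquad \vartheta_3(q)=\prod_{n\ge 1}(1-q^{2n})(1+q^{2n-1})^2,
\end{equation*}
taking the ratio yields the factorization
\begin{equation*}
\sqrt{k(q)} = \frac{\vartheta_2(q)}{\vartheta_3(q)} = 2q^{1/4}\, Q(q)^2,
\qquad
Q(q):=\prod_{j=1}^\infty\frac{1+q^{2j}}{1+q^{2j-1}}.
\end{equation*}
Substituting this on both sides, the inequality $\sqrt{k(q^n)}\ge 2^{1-n}(\sqrt{k(q)})^n$ is seen to be equivalent to $Q(q^n)\ge Q(q)^n$.

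Since $q^{2j}\le q^{2j-1}$ on $[0,1]$, every factor of $Q$ lies in $(0,1]$, so $Q(q)\in(0,1]$ and in particular $Q(q)^n\le Q(q)$. It therefore suffices to prove the weaker bound $Q(q^n)\ge Q(q)$, and since $q^n\le q$, this reduces to showing that $Q$ is non-increasing on $[0,1]$.

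To establish this monotonicity I would set $G(q):=-\log Q(q)=\sum_{m\ge 1}(-1)^{m-1}\log(1+q^m)$, expand each logarithm as a Taylor series, and swap summation order (justified by absolute convergence on $(0,1)$) to obtain
\begin{equation*}
G(q) = \sum_{k=1}^\infty\frac{(-1)^{k-1}}{k}\cdot\frac{q^k}{1+q^k},
\qquad
G'(q) = \sum_{k=1}^\infty\frac{(-1)^{k-1}q^{k-1}}{(1+q^k)^2}.
\end{equation*}
Pairing the consecutive indices $k=2j-1$ and $k=2j$ rewrites the derivative as
\begin{equation*}
G'(q) = \sum_{j=1}^\infty q^{2j-2}\cdot\frac{(1+q^{2j})^2 - q(1+q^{2j-1})^2}{(1+q^{2j-1})^2(1+q^{2j})^2},
\end{equation*}
and a direct expansion simplifies the numerator to $(1-q)(1-q^{4j-1})$, which is strictly positive for $q\in(0,1)$. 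Hence $G'(q)>0$, so $G$ is strictly increasing and $Q$ strictly decreasing, which closes the argument (the endpoints $q=0,1$ can be handled by continuity or direct substitution).

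The main obstacle will be this sign analysis: the alternating series for $G'$ has no obvious positivity, and everything hinges on recognizing the algebraic identity $(1+q^{2j})^2-q(1+q^{2j-1})^2=(1-q)(1-q^{4j-1})$, which makes each paired contribution manifestly nonnegative.
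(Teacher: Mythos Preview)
Your proof is correct. Both you and the paper start from the same infinite product
\[
\sqrt{k(q)}=2q^{1/4}Q(q)^2,\qquad Q(q)=\prod_{j\ge1}\frac{1+q^{2j}}{1+q^{2j-1}},
\]
and reduce the lemma to the inequality $Q(q^n)\ge Q(q)^n$. From there the arguments diverge. The paper handles this factor by factor: the elementary function $x\mapsto(1+x)^n/(1+x^n)$ is increasing on $[0,1]$ (its derivative is $n(1+x)^{n-1}(1-x^{n-1})/(1+x^n)^2\ge0$), and since $q^{2j}\le q^{2j-1}$ this gives $\bigl(\tfrac{1+q^{2j}}{1+q^{2j-1}}\bigr)^n\le\tfrac{1+q^{2jn}}{1+q^{(2j-1)n}}$ directly, hence $Q(q)^n\le Q(q^n)$ after taking the product. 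You instead first weaken via $Q(q)^n\le Q(q)$ (using $Q\le1$) and then prove $Q$ is decreasing through the series manipulation and the identity $(1+q^{2j})^2-q(1+q^{2j-1})^2=(1-q)(1-q^{4j-1})$. The paper's route is shorter and avoids the series rearrangement and the algebraic identity you flagged as the main obstacle; your route, while more computational, has the side benefit of establishing the monotonicity of $Q$ itself, which is a slightly stronger qualitative fact than what is needed.
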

Figure~\ref{KvonQ} shows the $n$-dependence of the inequality for different values of $q$.

\begin{figure}[t]
\centering
\begin{minipage}{.5\textwidth}
  \centering
  \includegraphics[width=6.5cm]{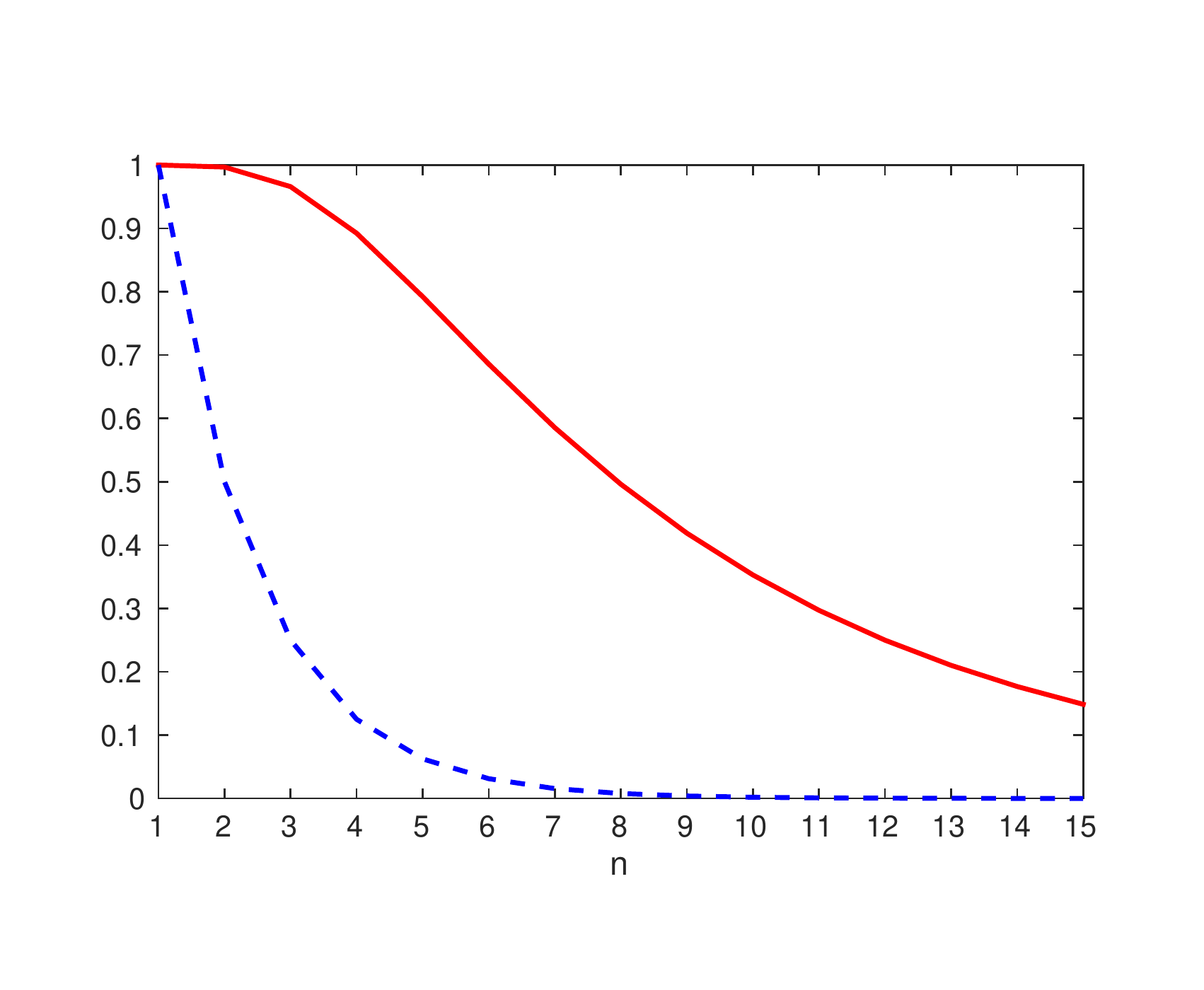}
  \vspace*{-1.2cm}
  \captionof*{figure}{(a) $q=0.5$}
\end{minipage}%
\begin{minipage}{.5\textwidth}
  \centering
  \includegraphics[width=6.5cm]{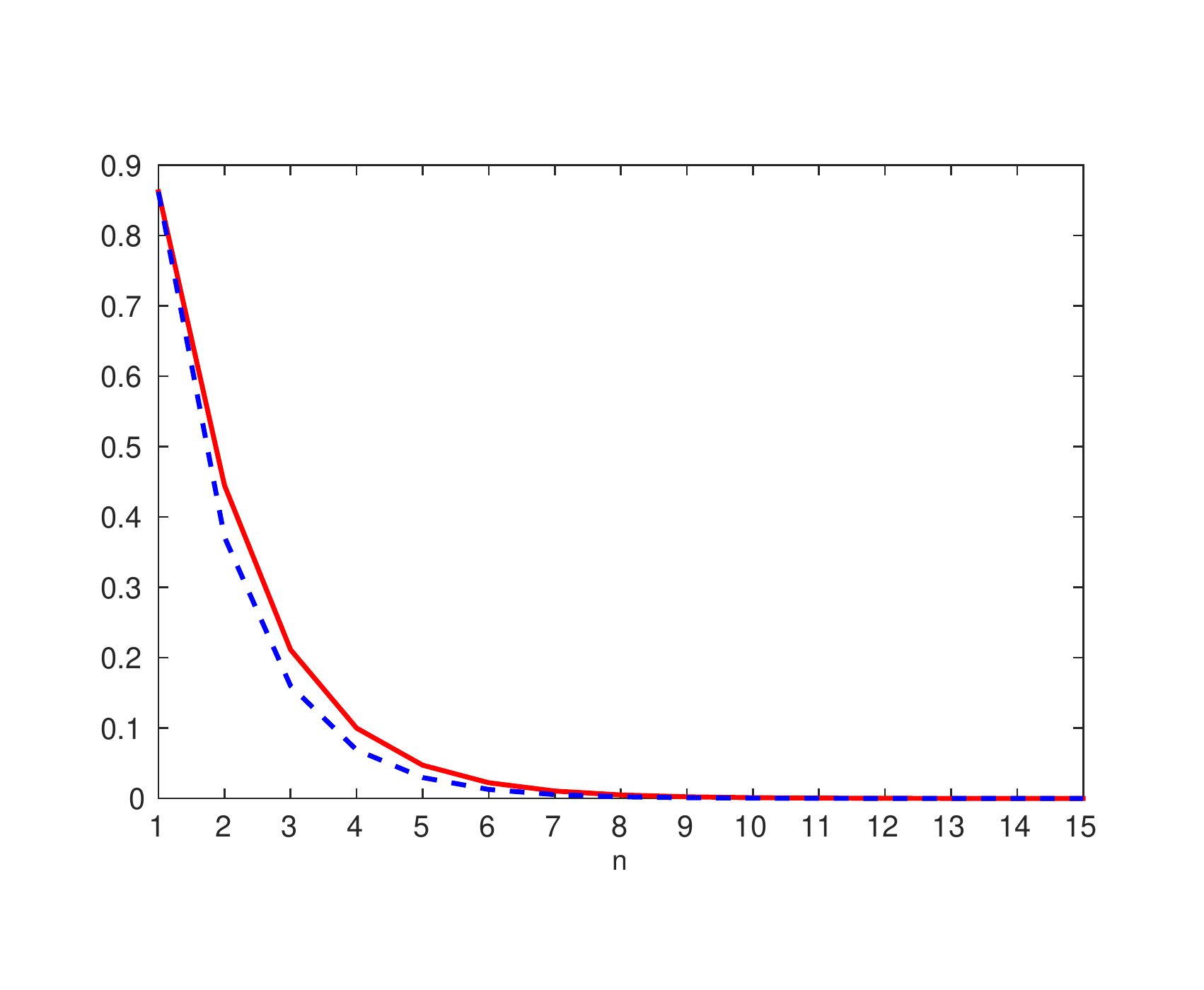}
  \vspace*{-1.2cm}
  \captionof*{figure}{(b) $q=0.05$}
\end{minipage}\\
\begin{minipage}{.5\textwidth}
  \centering
  \includegraphics[width=6.5cm]{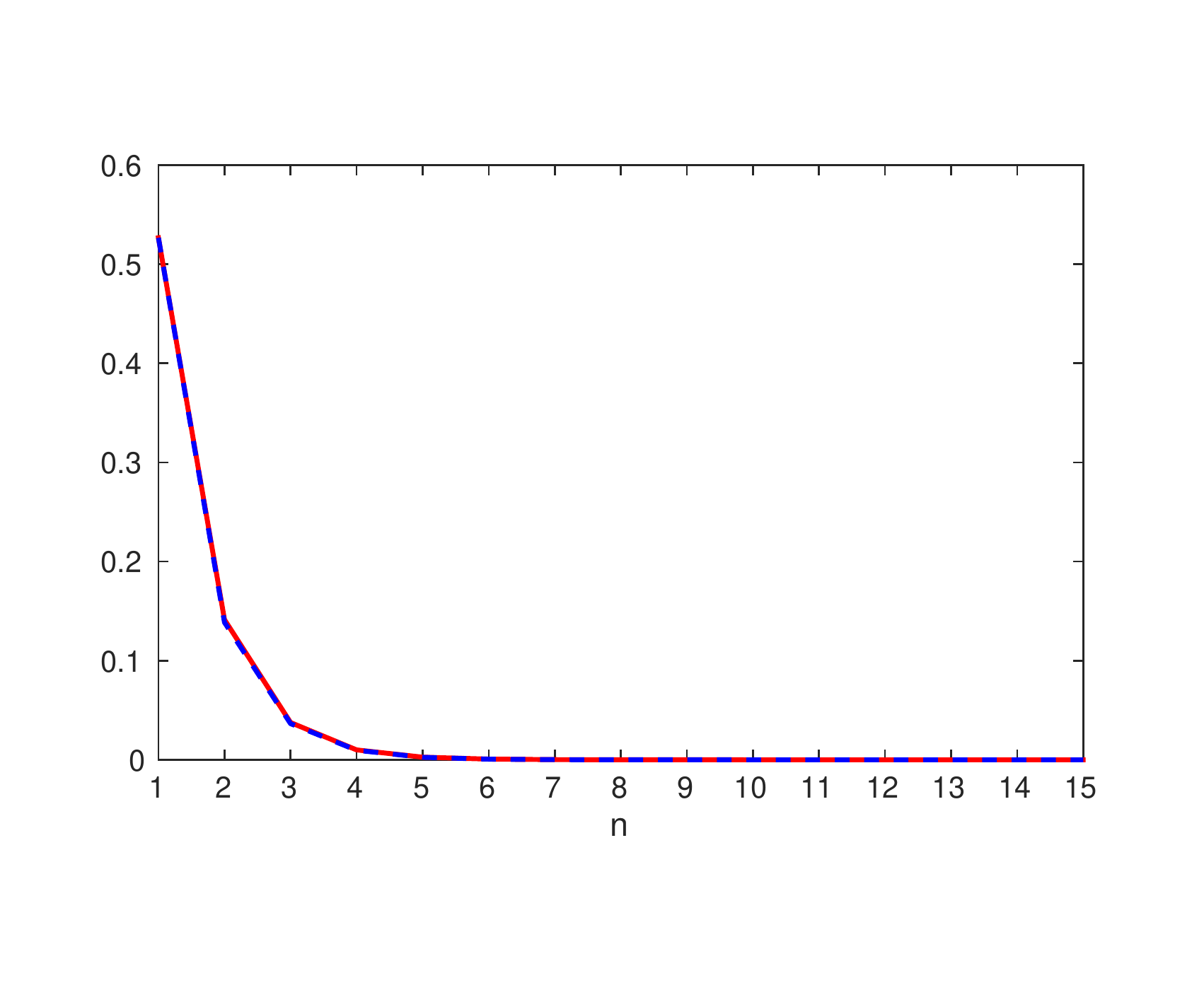}
  \vspace*{-1.4cm}
  \captionof*{figure}{(c) $q=0.005$}
\end{minipage}
\caption{Plot of $n$-dependence of the estimate in Lemma~\ref{hasest}. Red graphs represent $\sqrt{k(q^n)}$ and blue graphs $\frac{1}{2^{n-1}}k(q)^{n/2}$. At $n=1$ equality holds and either function takes the value $\sqrt{k(q)}$.}
\label{KvonQ}
\end{figure}

\begin{proof}
The elliptic modulus admits an infinite product representation \cite[Formula 8.197.3]{SovjetEnc}
\begin{align*}
k(q)=4\sqrt{q}\prod_{k=1}^{\infty}\left(\frac{1+q^{2k}}{1+q^{2k-1}}\right)^4.
\end{align*}
The function
\begin{align*}
x\mapsto\frac{(1+x)^n}{1+x^n}
\end{align*}
is monotonically increasing on the interval $[0,1]$, which can be verified with a computation of its derivative. Hence, as $0\leq q^{2k}\leq q^{2k-1}\leq 1$ it follows that
\begin{align*}
k(q)^n &=4^n\sqrt{q^n}\left(\prod_{k=1}^{\infty}
\left(\frac{1+q^{2k}}{1+q^{2k-1}}\right)^{n}\right)^4 \\
&\leq 4^n\sqrt{q^n}\left(\prod_{k=1}^{\infty}
\left(\frac{1+q^{2kn}}{1+q^{(2k-1)n}}\right)\right)^4
=4^{n-1}k(q^n).
\end{align*}
\end{proof}

Now we are ready to put things together and prove Theorem~\ref{hyper}.
\begin{proof}[Proof of Theorem~\ref{hyper}]
Consider a fixed pair of eigenvalues $a\in\sigma\brac{A}$ and $b\in\sigma\brac{B}$ connected by a certain eigenvalue curve $z:\Ibrac{0,1}\ra\C$ of $A_t := (1-t)A + tB$, with $z\brac{0} = a$ and $z\brac{1}=b$. Applying the inequality (\ref{equ:inequ}) we can estimate
\begin{align*}
\frac{1}{\Norm{A_t-A}{}} &\leq \Norm{(z\brac{t}\one - A)^{-1}}{}= \Norm{(z\brac{t}\one - A)^{-1}(\one - \overline{z\brac{t}}A)(\one - \overline{z\brac{t}}A)^{-1}}{} \\
&\leq \Norm{(z\brac{t}\one - A)^{-1}(\one - \overline{z\brac{t}}A)}{}\frac{1}{\lb 1-\rho\lb  B\rb\Norm{A}{}\rb}.
\end{align*}
We note that $\Norm{A}{}\leq1$ and $z\in\overline{\mathbb{D}}$ implies $\Norm{(1-\bar{z}A)^{-1}(z-A)}{}\leq1$~\cite[Formulas 4.12-4.14]{Nagy}. From Lemma~\ref{ratmod}, Equation~\eqref{inverse} it follows that
\begin{align*}
\Norm{(z\brac{t}\one - A)^{-1}(\one - \overline{z\brac{t}}A)}{}\leq \prod^{\abs{m}}_{i=1} \Abs{\frac{1-\overline{z\brac{t}}\lambda_i}{z\brac{t}-\lambda_i}},
\end{align*}
where $\{\lambda_i\}_{i=1,...,\abs{m}}$ are the zeros of the minimal polynomial of $A$. The theorem now follows from an application of Lemma~\ref{irgendson} and Lemma~\ref{hasest}. We choose $q$ with $k(q)=\Abs{\frac{a-b}{1-\bar{a}b}}$ and there is $t^*\in[0,1]$ such that
\begin{align*}
\frac{1}{\Norm{A_{t^*}-A}{}}\leq\frac{1}{{\lb 1-\rho\lb  B\rb\Norm{A}{}\rb}}\frac{1}{\sqrt{k(q^{2\abs{m}})}}\leq\frac{2^{2\abs{m}-1}}{{\lb 1-\rho\lb  B\rb\Norm{A}{}\rb}}\Abs{\frac{1-\bar{a}b}{a-b}}^{\abs{m}}.
\end{align*}
\end{proof}
Some remarks concerning the proof of Theorem~\ref{hyper} are in order.
\begin{enumerate}
\item \label{vomTeufel} We emphasize that one can obtain the final result, Theorem~\ref{hyper}, without relying on Chebychev-Blaschke interpolation (Lemma~\ref{lem:BlaschkeInt}). Going back to Equation~\eqref{pitty} we can find~\cite[(1.11)]{Garnett81}
\begin{align*}
\min_{s_i\in \lb -1,1\rb}& \max_{s\in\Ibrac{0,\Abs{C(a,b)}}}\Abs{\prod^{n}_{i=1}\frac{s-s_i}{1-ss_i}}=\min_{s_i\in \lb 0,1\rb} \max_{s\in\Ibrac{0,\Abs{C(a,b)}}}\Abs{\prod^{n}_{i=1}\frac{s-s_i}{1-ss_i}}\\
&\geq\min_{s_i\in \lb 0,1\rb} \max_{s\in\Ibrac{0,\Abs{C(a,b)}}}\prod^{n}_{i=1}\abs{s-s_i}\geq{\abs{C(a,b)}}^n\frac{1}{2^{2n-1}}.
\end{align*}

Lemma~\ref{irgendson} is stronger than this bound and it is the analogue of the interpolation result~\eqref{equ:Cheb}. It is natural to ask by how much the estimates differ, i.e.~how much is lost when applying Lemma~\ref{hasest}.
In the context under consideration the disadvantage is probably small. Figure~\ref{KvonQ} shows that for $q$ such that $k(q)\approx1$ the left- and right-hand side in Lemma~\ref{hasest} differ significantly, at least for small enough $n$. However, for most applications (e.g. when $B$ is a small perturbation of $A$) we are interested in the region, where $k(q)=\abs{C(a,b)}<<1$. In view of Figure~\ref{KvonQ} we expect that in this region the blue and red curve practically match.

\item The resolvent estimate used in our proof can be improved. For example (if $\Norm{A}{}\leq1$) it holds that~\cite[Corollary III.3]{OlegRes} (see also~\cite{SimonDavies}) $$\Norm{(z\one-A)^{-1}}{}\leq \frac{\cot{(\pi/(4\abs{m}))}}{\min_{\lambda_i\in\sigma(A)}\abs{1-\bar{\lambda}_iz}}\prod^{\abs{m}}_{i=1} \Abs{\frac{1-\bar{\lambda}_iz}{z-\lambda_i}}.$$

This estimate is sharp for $\abs{z}=1$ but we cannot leverage it. It is also possible to improve this bound and derive (a more complicated) estimate that is optimal for $z\not\in\sigma(A)$~\cite{Rachich}. Bounds of this type demonstrate that Blaschke products occur naturally in the context of spectral variation.
\item Our theorem can be seen as a consequence of the following fact. For any $A\in\cM_n$ with $\Norm{A}{}\leq1$ and any curve $z:[0,1]\rightarrow\mathbb{D}$ with $z(0)=a,\ z(1)=b$ there is $t^*\in[0,1]$ so that
$$\Norm{(z\brac{t^*}\one - A)^{-1}(\one - \overline{z\brac{t^*}}A)}{}\leq\frac{1}{\sqrt{k(q^{2\abs{m}})}},$$
where $q$ is chosen with $k(q)=\Abs{\frac{a-b}{1-\bar{a}b}}$.
The estimate is \emph{optimal} in the following sense. Let $z$ be a geodesic curve joining $a$ and $b$ and let $A$ be a model matrix~\eqref{modelmat} whose eigenvalues are located on $z$ between $a$ and $b$. Then
\begin{align*}
\Norm{(z\brac{t}\one - A)^{-1}(\one - \overline{z\brac{t}}A)}{}=
\prod_{i=1}^{\abs{m}} \Abs{\frac{1-\Abs{C(a,b)}^2t_i t}{\Abs{C(a,b)}\brac{t - t_i}}}
\end{align*}
for certain $t,t_i\in[0,1]$. By adjusting the eigenvalues of $A$ we can achieve that $\{\pm\sqrt{t_i}\}_i$ are the zeroes of a Chebychev-Blaschke product of degree $2\abs{m}$. In this case there is $t^*$ such that the left-hand side equals $\frac{1}{\sqrt{k(q^{2\abs{m}})}}$ with $k(q)=\Abs{\frac{a-b}{1-\bar{a}b}}$.

\item Krause's approach to spectral variation bounds~\cite{Krause199473} can probably be used to improve the bound from Theorem~\ref{hyper}. The main difficulty seems to lie in the solution of the more general interpolation
\begin{align*}
\min_{B_{n-r}\in\blaschkes_{n-r}}\max_{t\in[0,1]} t^{r}|B_{n-r}\brac{t}|
\end{align*} 
for arbitrary $r\leq n$.
\end{enumerate}

\section{Discussion}
\label{discussion}

In this section we compare the hyperbolic bound from Theorem~\ref{hyper} to the classical estimate \eqref{equ:GenBound}. For convenience we shall abbreviate
\begin{align*}
r_h := \frac{2^{2-\frac{1}{n}}}{\lb 1-\norm{A}{}\Norm{B}{}\rb^{\frac{1}{n}}}\Norm{A-B}{}^{\frac{1}{n}}\quad\textnormal{and}\quad r_e := C_{n}\lb \norm{A}{}+\norm{B}{}\rb^{1-\frac{1}{n}}\Norm{A-B}{}^{\frac{1}{n}}.
\end{align*} 
In Subsection~\ref{PertTheory} we consider small perturbations of $A$ with $\Norm{A}{}<1$ and use the two bounds to locate the eigenvalues of the perturbed matrix. In Subsection~\ref{improveBound} we drop the assumption $\Norm{A}{},\Norm{B}{} < 1$ by choosing a suitable normalization. This leads to improved $C_n$ in the strongest Euclidean spectral variation bound~\cite{Krause199473} if $\Norm{A-B}{}$ is small enough.

\subsection{Perturbation theory}\label{PertTheory}

Let $A\in\cM_n$ with $\norm{A}{}< 1$ and denote by $E$ a small perturbation of $A$, i.e.~$E\in\cM_n$ with $\norm{E}{}=\epsilon$ such that $\norm{A}{}+\epsilon <1$. Let $a\in\mathbb{D}$ denote an eigenvalue of $A$ and let $\epsilon$ be chosen so small that $r_h,r_e < 1$ holds. By Theorem \ref{hyper} there is an eigenvalue of $B=A+E$ in the hyperbolic disk
\begin{align*}
\left\{z\in \mathbb{D}\hspace*{0.1cm}|\: \Abs{\frac{a-z}{1-\overline{a}z}}<r_h\right\} = \{z\in \mathbb{D}\hspace*{0.1cm}|\: \Abs{C-z}<R\},
\end{align*}
where $C\in\mathbb{D}$ and $R\in\lbr 0,1\rbr$ are according to \eqref{equ:HypDisc}. The Euclidean radius $R$ of a disk with fixed hyperbolic radius $r_h$ becomes smaller, when $\abs{a}$ gets larger. For this reason a hyperbolic estimate gets stronger for eigenvalues of larger modulus, compare Figure \ref{fig:CompBh}. Note that if $|a-C|+R\leq r_e$ holds, then the hyperbolic disk is contained in the Euclidean disk $\{z\in \mathbb{D}|\: \Abs{a-z}<r_e\}$. By direct computation this condition is equivalent to
\begin{align}
1-\abs{a}^2\leq \frac{r_e}{r_h}(1-r_h\abs{a}),\label{equ:HradiusVsEradius}
\end{align}
which can be satisfied even if $r_h>r_e$, i.e.~the numerical value of the estimate from Theorem~\ref{hyper} is larger than the one from Equation~\ref{equ:GenBound}. Inserting $r_e$ and $r_h$ and using $\abs{a}\leq 1$ this is certainly fulfilled if
\begin{align*}
1 \leq \frac{C_n}{2^{2-\frac{1}{n}}}(\norm{A}{}+\norm{B}{})^{1-\frac{1}{n}}(1-\norm{A}{}\norm{B}{})^{\frac{1}{n}}(1-\frac{2^{2-\frac{1}{n}}}{(1-\norm{A}{}\norm{B}{})^\frac{1}{n}}\epsilon^{\frac{1}{n}}).
\end{align*}
For simplicity we shall assume that $\norm{A}{}=\norm{B}{}$ but a similar result holds for small $\epsilon$ as $\norm{A}{}-\epsilon\leq \norm{B}{}\leq \norm{A}{}+\epsilon$. Then it is sufficient if $\epsilon$ is chosen so that
\begin{align*}
\lb\frac{\epsilon}{2}\rb^{\frac{1}{n}} \leq \frac{1}{4}(1-\norm{A}{}^2)^\frac{1}{n} - \frac{1}{2C_n\norm{A}{}}.
\end{align*}
Inserting $C_n = 2^{2-\frac{1}{n}}$ we see that the right-hand side becomes positive for $\norm{A}{}>\frac{1}{2}$ and sufficiently large $n$. Hence in this case we can always find $\epsilon=\epsilon(n)$ so that the estimate holds true. If $n=5$ is chosen fixed then the right-hand side above is positive if $0.65\leq \norm{A}{}\leq 0.95$ and the inequality is fulfilled if $\epsilon=\epsilon(\Norm{A}{})$ is chosen small, compare Figure \ref{fig:CompBh}. We can also compare our estimate to~\eqref{equ:GenBound} with the best constant $C_n = \frac{16}{3\sqrt{3}}$ ~\cite{Krause199473}. For sufficiently large $n$ and $\norm{A}{}>\frac{3\sqrt{3}}{8}\approx 0.65$, again by appropriate choice of $\epsilon$, Theorem~\ref{hyper} is stronger than~\eqref{equ:GenBound}. Figure \ref{fig:CompKr} depicts the case $n=12$ with $C_{12} = 2.6543$ taken from~\cite[Table 1]{Krause199473}.

\begin{figure}
\centering
\begin{minipage}{.5\textwidth}
  \centering
  \includegraphics[width=6.5cm]{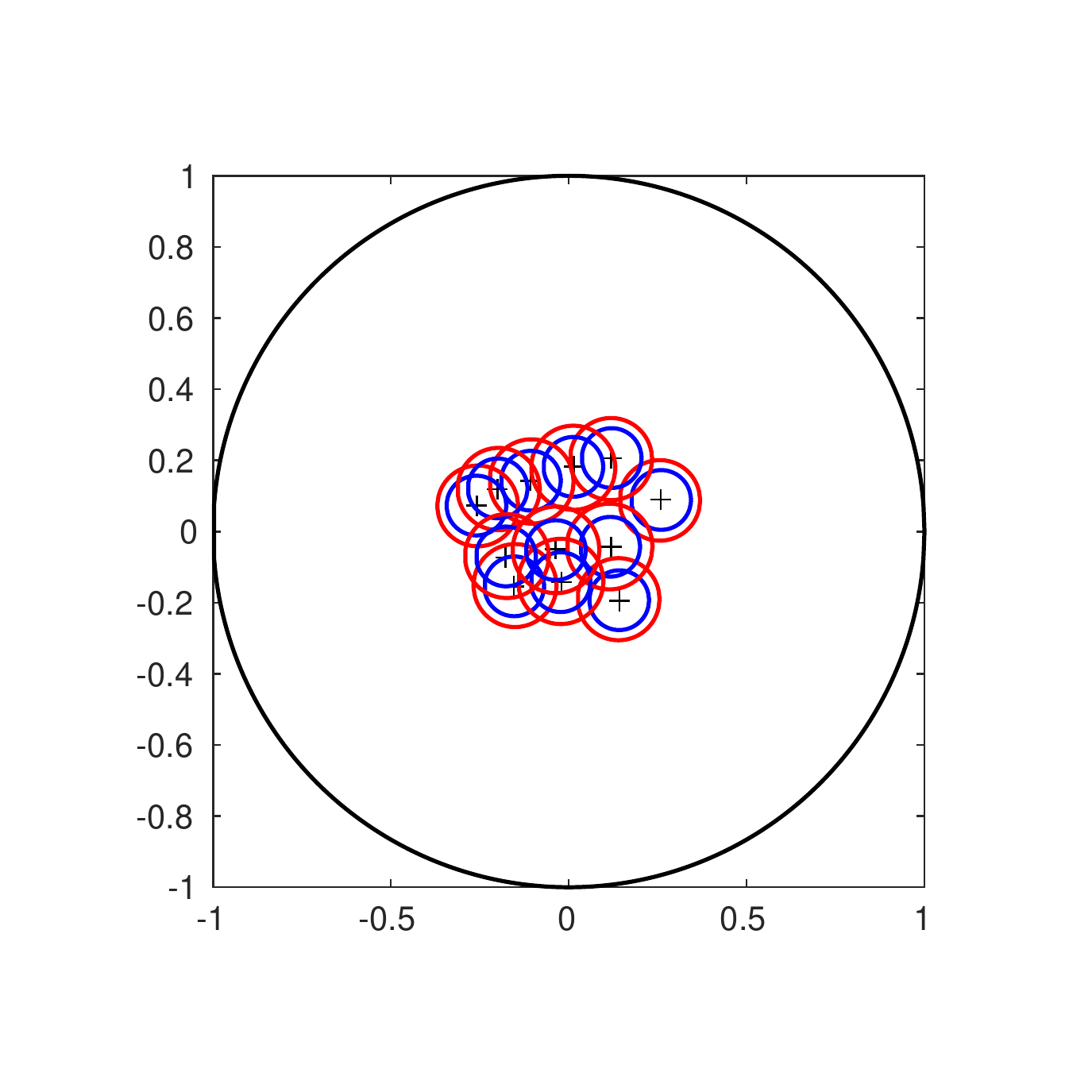}
  \vspace*{-1.5cm}
  \captionof*{figure}{(a) $\norm{A}{}=0.5$}
\end{minipage}%
\begin{minipage}{.5\textwidth}
  \centering
\includegraphics[width=6.5cm]{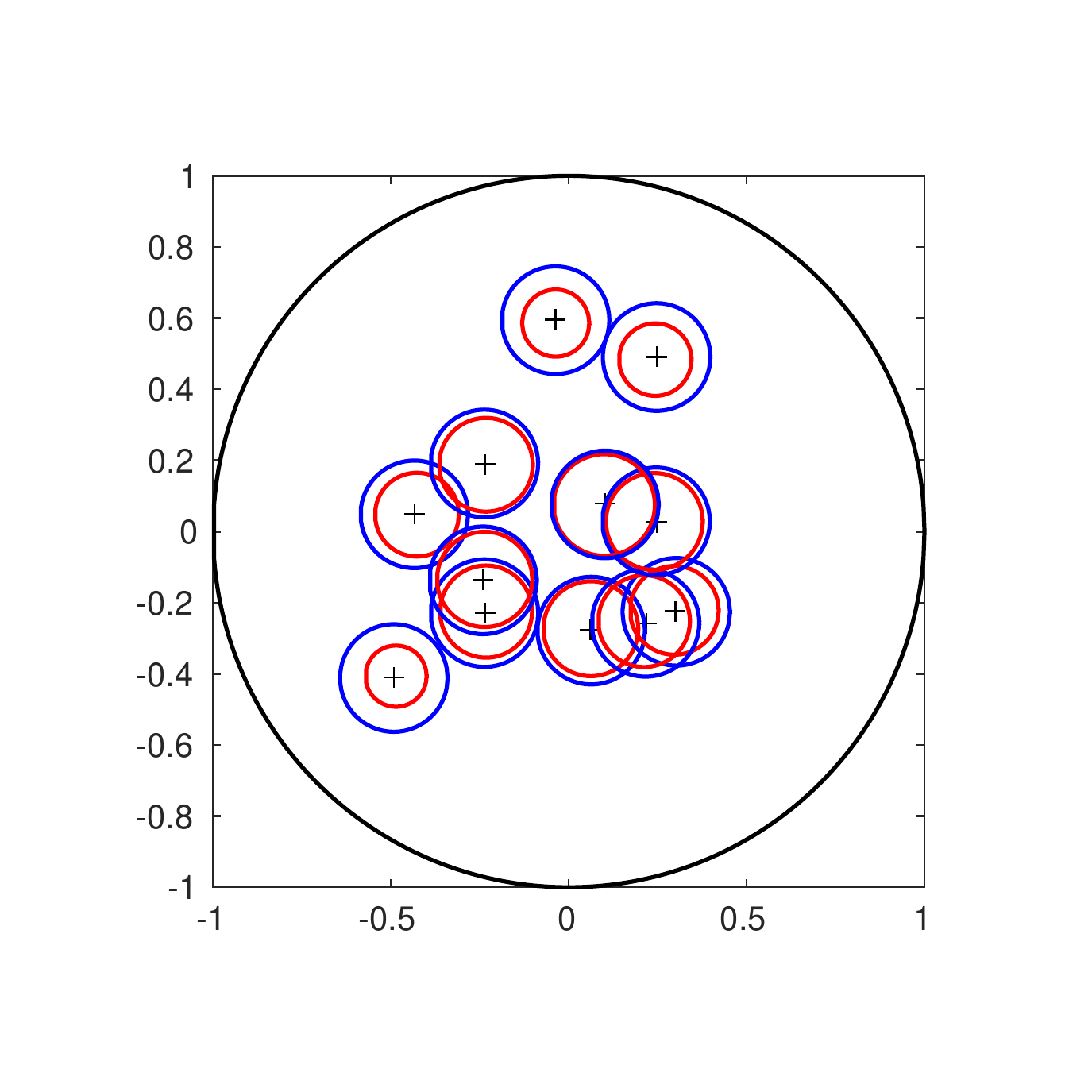}
\vspace*{-1.5cm}
  \captionof*{figure}{(b) $\norm{A}{}=0.95$}
\end{minipage}
\caption{Localization of eigenvalues of a perturbed $12\times 12$ matrix $B=A+E$ for randomly chosen $A$ and $\norm{E}{}\leq 10^{-18}$ with respect to the spectrum of $A$. Comparison between Theorem \ref{hyper}~(red) and the bound \eqref{equ:GenBound}~(blue) with $C_{12} = 2.6543$~.}
\label{fig:CompKr}
\end{figure}

On the other hand, if under assumption $\norm{A}{}=\norm{B}{}$ we have
\begin{align*}
1-\frac{C_n}{2}\norm{A}{}^{1-\frac{1}{n}}\lb 1-\norm{A}{}^2\rb^{\frac{1}{n}} > \norm{A}{}^2
\end{align*}
then \eqref{equ:HradiusVsEradius} cannot be fulfilled. For $C_n = 2^{2-\frac{1}{n}}$ this holds true for sufficiently large $n$ and $\norm{A}{}<\sqrt{2}-1$, see Figure \ref{fig:CompBh}.

\subsection{Improving the Euclidean bound}\label{improveBound}

In Section \ref{PertTheory} we have shown how our hyperbolic spectral variation bound improves on known estimates for certain choices of $\Norm{A}{}<1$ and if $\Norm{A-B}{}$ is small enough. Here, we drop the assumption $\Norm{A}{}<1$ and derive an Euclidean spectral variation bound with an improved constant as compared to~\cite{Krause199473}. As in~\cite{Krause199473} we set 
$M_2 := \max\lset\norm{A}{},\norm{B}{}\rset$ for our analysis.
\begin{corollary}\label{kaputze}
For $A,B\in\cM_n$ with $M_2 := \max\lset\norm{A}{},\norm{B}{}\rset$ and distance 
\begin{align}
\Norm{A-B}{}\leq \lb\frac{1}{2M_2}\rb^{n-1} \lb\frac{n+1}{n-1}\rb^n \alpha^n_n\min_{a\in\sigma(A)\setminus\lset 0\rset}\abs{a}^n
\label{DistanceUpper}
\end{align}
we get
\begin{align}
d_E\lb\sigma\lb A\rb,\sigma\lb B\rb\rb \leq \frac{1}{\alpha_n}\lb 2M_2\rb^{1-\frac{1}{n}}\Norm{A-B}{}^{\frac{1}{n}},\label{KrauseBound}
\end{align}
where
\begin{align*}
\alpha_n := \frac{1}{2}\lb\frac{2}{\sqrt{n^2-1}}\rb^\frac{1}{n}\sqrt{\frac{n-1}{n+1}}. 
\end{align*} 
\label{cor:Improved}
\end{corollary}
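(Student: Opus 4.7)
The plan is to apply Theorem~\ref{hyper} to a rescaled version of $A,B$, then convert the resulting hyperbolic bound back into the Euclidean setting and optimize the free scaling parameter. Introduce $c>1$ and set $\lambda:=cM_2$; the rescaled matrices $\widetilde A:=A/\lambda$ and $\widetilde B:=B/\lambda$ satisfy $\|\widetilde A\|,\|\widetilde B\|\le 1/c<1$ and hence $\rho(\widetilde B)\|\widetilde A\|\le 1/c^2$, so Theorem~\ref{hyper} applied to $\widetilde A,\widetilde B$ (noting that $A$ and $\widetilde A$ share a minimal polynomial of the same degree) yields
\[
d_H\bigl(\sigma(\widetilde A),\sigma(\widetilde B)\bigr)\ \le\ r\ :=\ \frac{2^{2-1/n}}{(1-c^{-2})^{1/n}}\left(\frac{\|A-B\|}{\lambda}\right)^{1/n}.
\]

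For each matched pair $(a,b)\in\sigma(A)\times\sigma(B)$ one thus has $p(a/\lambda,b/\lambda)\le r$. Using that the hyperbolic disk of radius $r$ around $a/\lambda$ is a Euclidean disk with center and radius given by~\eqref{equ:HypDisc}, a short computation gives the Euclidean estimate
\[
|a-b|\ \le\ \frac{r(\lambda^2-|a|^2)}{\lambda-r|a|}.
\]
I would then optimize over $c>1$. Setting the derivative of the resulting coefficient to zero yields the critical equation $(n-1)c^4-2(n+1)c^2+(n-1)=0$, whose root $c^2=(n+1)/(n-1)$ lies in $(1,\infty)$ and is the minimizer. A direct algebraic calculation shows that at this value of $c$, the product $\lambda r$ coincides exactly with $r_E:=\alpha_n^{-1}(2M_2)^{1-1/n}\|A-B\|^{1/n}$, the right-hand side of the corollary.

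The final step is to exploit the distance hypothesis~\eqref{DistanceUpper}, which at the optimal $c$ rewrites equivalently as $r_E\le\tfrac{n+1}{n-1}\min_{a\in\sigma(A)\setminus\{0\}}|a|$. Combined with the Euclidean estimate above, and treating $a=0$ separately (where the bound reduces immediately to $|b|\le\lambda r=r_E$), this should yield $|a-b|\le r_E$ for every matched pair, and taking the maximum over pairs gives the corollary. The main obstacle I anticipate is precisely this final step: making the factor $((n+1)/(n-1))^n$ in the hypothesis mesh exactly with the geometric factor $(\lambda^2-|a|^2)/(\lambda-r|a|)$ from the hyperbolic-to-Euclidean conversion is a delicate algebraic manipulation, and is where the specific shape of $\alpha_n$ (and the restrictive form of the distance bound on $\|A-B\|$) ultimately originates.
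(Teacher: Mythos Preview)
Your strategy---rescale by $\lambda=cM_2$, apply Theorem~\ref{hyper}, convert hyperbolic to Euclidean via~\eqref{equ:HypDisc}, then optimize over $c$---is exactly the paper's. Two remarks are in order.

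First, minimizing $\lambda r$ over $c>1$ leads to $(n+1)(c^2-1)=2c^2$, i.e.\ $(n-1)c^2=n+1$, not the quartic you wrote (which in fact does \emph{not} have $c^2=(n+1)/(n-1)$ as a root). Your optimal value of $c$ and the identity $\lambda r=r_E$ are nevertheless correct.

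Second, the paper does not perform the ``delicate algebraic manipulation'' you anticipate in the last step. It instead invokes the containment criterion~\eqref{equ:HradiusVsEradius} with the elementary sufficient condition $r_h\le|a|$ (together with $r_e\ge r_h$); in your variables this is precisely $\lambda r\le|a|$, i.e.\ $r_E\le\min_{a\ne0}|a|$, which is exactly the inequality $\frac{r(\lambda^2-|a|^2)}{\lambda-r|a|}\le\lambda r$ you need. The paper then asserts that this condition is equivalent to~\eqref{DistanceUpper}. Your own calculation that~\eqref{DistanceUpper} rewrites as $r_E\le\tfrac{n+1}{n-1}\min_{a\ne0}|a|$ is correct, so the factor $\bigl((n+1)/(n-1)\bigr)^n$ is not absorbed by any further geometric argument in the paper; there is a small but genuine discrepancy between the hypothesis stated in the corollary and the sufficient condition actually derived in its proof.
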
 

Table \ref{table:Values} shows some values for $\frac{1}{\alpha_n}$. For $n\geq 12$ these values are smaller than the bounds in~\cite{Krause199473}. It is interesting to note, that the sequence of $\frac{1}{\alpha_n}$ is decreasingly converging to $2$, which is the optimal asymptotic behaviour as this quantity cannot be smaller than $2^{1-\frac{1}{n}}$~\cite{Bhatia1}. Corollary~\ref{kaputze} improves on the best Euclidean spectral variation bounds if the distance $\norm{A-B}{}$ is small enough. More specifically direct computation shows that we require $\norm{A-B}{}\lesssim \lb\frac{|a^*|}{4M_2}\rb^{n-1}\frac{|a^*|e}{\sqrt{n^2-1}}$ for $n$ large enough, where $a^*$ denotes the (in modulus) smallest non-zero eigenvalue of $A$. Note that the right-hand side goes to zero exponentially fast in $n$ at fixed $a^*$, which means that in practice
$\norm{A-B}{}$ would have to be extremely small.

\begin{proof}

First consider the case $M_2<1$. By choosing a constant $C$ such that
\begin{align}
\frac{2^{2-\frac{1}{n}}}{\lb 1-M^2_2\rb^{\frac{1}{n}}}\Norm{A-B}{}^{\frac{1}{n}} = C\lb 2M_2\rb^{1-\frac{1}{n}}\Norm{A-B}{}^{\frac{1}{n}}\label{cequ}
\end{align}
we have $r_h:=\frac{2^{2-\frac{1}{n}}}{\lb 1-\norm{A}{}\Norm{B}{}\rb^{\frac{1}{n}}}\Norm{A-B}{}^{\frac{1}{n}}\leq C\lb 2M_2\rb^{1-\frac{1}{n}}\Norm{A-B}{}^{\frac{1}{n}}$. Assuming $r_h\leq \min_{a\in\sigma(A)\setminus\lset 0\rset}\abs{a}$ inequality \eqref{equ:HradiusVsEradius} is fulfilled for for any $a\in\sigma(A)$ and any radius $r_e\geq r_h$. By the discussion preceding \eqref{equ:HradiusVsEradius} this shows that the Euclidean disk with radius $r_e = C\lb 2M_2\rb^{1-\frac{1}{n}}\Norm{A-B}{}^{\frac{1}{n}}$ around any $a\in\sigma\lb A\rb$ contains the hyperbolic disk with radius $r_h$ around $a\in\sigma\lb A\rb$ in which an eigenvalue of $B$ is located by Theorem \ref{hyper}. This shows that if $r_h\leq C\lb 2M_2\rb^{1-\frac{1}{n}}\Norm{A-B}{}^{\frac{1}{n}}\leq \min_{a\in\sigma(A)\setminus\lset 0\rset}\abs{a}$, then we can bound the Euclidean optimal matching distance by
\begin{align}
d_E\lb\sigma\lb A\rb,\sigma\lb B\rb\rb\leq C\lb 2M_2\rb^{1-\frac{1}{n}}\Norm{A-B}{}^{\frac{1}{n}}.
\label{equ:bound24}
\end{align} 
Next we want to find the value of $M_2$ such that $C$ in the above inequality is smallest. Note that \eqref{cequ} is equivalent to
\begin{align*}
\frac{C}{2}M_2^{1-\frac{1}{n}}\lb 1-M_2^2\rb^{\frac{1}{n}} = 1.
\end{align*}
The maximum
\begin{align*}
\max_{x\in\lbr 0,1\rb} \frac{1}{2}\lb 1-x^2\rb^\frac{1}{n} x^{1-\frac{1}{n}} = \alpha_n
\end{align*}
is attained for $x_\text{max} := \sqrt{\frac{n-1}{n+1}}$. It follows that we can choose $C=\frac{1}{\alpha_n}$ for $M_2 = x_\text{max}$. But $M_2$ can be always set to this value by suitable normalization. 

For arbitrary $M_2$ and $\Norm{A-B}{}$ by \eqref{equ:bound24} it holds that
\begin{align*}
d_E\lb\sigma\lb A\rb,\sigma\lb B\rb\rb &= \frac{M_2}{x_\text{max}} d_E\lb\sigma\lb \frac{x_\text{max}}{M_2}A\rb,\sigma\lb \frac{x_\text{max}}{M_2}B\rb\rb \\
&\leq\frac{M_2}{x_\text{max}}\frac{1}{\alpha_n}\lb 2 x_\text{max}\rb^{1-\frac{1}{n}}\lb\frac{x_\text{max}}{M_2}\Norm{A-B}{}\rb^{\frac{1}{n}} \\
&=\frac{1}{\alpha_n}\lb 2M_2\rb^{1-\frac{1}{n}}\Norm{A-B}{}^{\frac{1}{n}}
\end{align*}
provided that $\frac{1}{\alpha_n}\lb 2x_{\text{max}}\rb^{1-\frac{1}{n}}\lb\frac{x_{\text{max}}}{M_2}\rb^{\frac{1}{n}}\Norm{A-B}{}^{\frac{1}{n}}\leq \frac{x_{\text{max}}}{M_2} \min_{a\in\sigma(A)\setminus\lset 0\rset}\abs{a}$, which is equivalent to \eqref{DistanceUpper}.

\end{proof}

\begin{table}
\begin{tabular}{l|*{8}{|c}|}
\hspace*{0.1cm}$n$ & \hspace*{0.2cm}1\hspace*{0.2cm} & \hspace*{0.2cm}2\hspace*{0.2cm} & \hspace*{0.2cm}3\hspace*{0.2cm} & \hspace*{0.2cm}4\hspace*{0.2cm} & \hspace*{0.2cm}5\hspace*{0.2cm} & \hspace*{0.2cm}6\hspace*{0.2cm} & \hspace*{0.2cm}7\hspace*{0.2cm} & \hspace*{0.2cm}8\hspace*{0.2cm} \\
\hline
$\frac{1}{\alpha_n}$ & 2 & 3.2237 & 3.1748 & 3.0458 & 2.9302 & 2.8353 & 2.7579 & 2.6942 
\end{tabular}$~\cdots$\vspace*{0.3cm}
\hspace*{0.5cm}$\cdots$\begin{tabular}{c*{7}{|c}}
\hspace*{0.2cm}9\hspace*{0.2cm} & \hspace*{0.2cm}10\hspace*{0.2cm} & \hspace*{0.2cm}11\hspace*{0.2cm} & \hspace*{0.2cm}12\hspace*{0.2cm}& \hspace*{0.2cm}100\hspace*{0.2cm}& \hspace*{0.2cm}1000\hspace*{0.2cm}& \hspace*{0.2cm}$\infty$\hspace*{0.2cm}  \\
\hline
2.6410	& 2.5959 & 2.5572 & 2.5236 & 2.101 & 2.0145 & 2
\end{tabular}
\caption{Some values of $\frac{1}{\alpha_n}$.}
\label{table:Values}
\end{table}

\section*{Acknowledgements}
OS and AMH acknowledge financial support by the Elite Network of Bavaria (ENB) project
QCCC and the CHIST-ERA/BMBF project CQC. OS acknowledges financial support from European Union under project QALGO (Grant Agreement No. 600700). This work was made possible partially through the support of grant
\#48322 from the John Templeton Foundation. The opinions
expressed in this publication are those of the authors and do not
necessarily reflect the views of the John Templeton Foundation. We are thankful to Michael M. Wolf for creating conditions that made this work possible.

\section*{References}

\bibliographystyle{elsarticle-harv}

\begin{thebibliography}{0}
\expandafter\ifx\csname natexlab\endcsname\relax\def\natexlab#1{#1}\fi
\expandafter\ifx\csname url\endcsname\relax
  \def\url#1{\texttt{#1}}\fi
\expandafter\ifx\csname urlprefix\endcsname\relax\def\urlprefix{URL }\fi

\end{thebibliography}


\begin{thebibliography}{28}
\expandafter\ifx\csname natexlab\endcsname\relax\def\natexlab#1{#1}\fi
\expandafter\ifx\csname url\endcsname\relax
  \def\url#1{\texttt{#1}}\fi
\expandafter\ifx\csname urlprefix\endcsname\relax\def\urlprefix{URL }\fi

\bibitem[{Bauer and Fike(1960)}]{bauerfiker}
Bauer, F., Fike, C., 1960. Norms and exclusion theorems. Numerische Mathematik
  2~(1), 137--141.

\bibitem[{Bhatia(1997)}]{Bhatia1}
Bhatia, R., 1997. Matrix Analysis. Graduate Texts in Mathematics. Springer New
  York.

\bibitem[{Bhatia et~al.(1990)Bhatia, Elsner, and Krause}]{Bhatia1990195}
Bhatia, R., Elsner, L., Krause, G., 1990. Bounds for the variation of the roots
  of a polynomial and the eigenvalues of a matrix. Linear Algebra and its
  Applications 142~(0), 195 -- 209.

\bibitem[{Brannan et~al.(1999)Brannan, Esplen, and Gray}]{Brannan}
Brannan, D., Esplen, M., Gray, J., 1999. Geometry. Cambridge University Press.

\bibitem[{Davies and Simon(2006)}]{SimonDavies}
Davies, E., Simon, B., 2006. Eigenvalue estimates for non-normal matrices and
  the zeros of random orthogonal polynomials on the unit circle. Journal of
  Approximation Theory 141~(2), 189 -- 213.

\bibitem[{Elsner(1982)}]{ElsnerAlt}
Elsner, L., 1982. On the variation of the spectra of matrices. Linear Algebra
  and its Applications 47~(0), 127 -- 138.

\bibitem[{Elsner(1985)}]{Elsner198577}
Elsner, L., 1985. An optimal bound for the spectral variation of two matrices.
  Linear Algebra and its Applications 71~(0), 77 -- 80.

\bibitem[{Foia\c{s} and Frazho(1990)}]{FFinter}
Foia\c{s}, C., Frazho, A., 1990. The commutant lifting approach to
  interpolation problems. Operator theory, advances and applications.
  Birkh{\"a}user.

\bibitem[{Friedland(1982)}]{Friedland}
Friedland, S., 1982. Variation of tensor powers and spectrat. Linear and
  Multilinear Algebra 12~(2), 81--98.

\bibitem[{Garnett(2007)}]{Garnett81}
Garnett, J., 2007. Bounded Analytic Functions. Graduate Texts in Mathematics.
  Springer New York.

\bibitem[{Gil(2002)}]{MGil}
Gil, M.~I., 2002. A bound for the spectral variation of two matrices. Applied
  Mathematics E-Notes 2, 72--77.

\bibitem[{Gradshteyn and Ryzhik(2014)}]{SovjetEnc}
Gradshteyn, I., Ryzhik, I., 2014. Table of Integrals, Series, and Products.
  Elsevier Science.

\bibitem[{Krantz(2007)}]{Krantz}
Krantz, S., 2007. Geometric Function Theory: Explorations in Complex Analysis.
  Cornerstones. Birkh{\"a}user Boston.

\bibitem[{Krause(1994)}]{Krause199473}
Krause, G.~M., 1994. Bounds for the variation of matrix eigenvalues and
  polynomial roots. Linear Algebra and its Applications 208–209~(0), 73 --
  82.

\bibitem[{Nikolski(2006)}]{Nikolski2006}
Nikolski, N., 2006. Condition numbers of large matrices, and analytic
  capacities. St. Petersburg Mathematical Journal 17~(4), 641--682.

\bibitem[{Nokrane(1999)}]{Abdel1}
Nokrane, A., 1999. Le lemme de schwarz pour les multifonctions analytiques
  finies et applications. Ph.D. thesis, Universit\'{e} Laval Qu\'{e}bec.

\bibitem[{Nokrane(2009)}]{Abdel2}
Nokrane, A., 2009. Estimating matching distance between spectra. Operators and
  matrices 3~(4), 503--508.

\bibitem[{Ostrowski(1958)}]{Ostrowski}
Ostrowski, A., 1958. {\"Uber die Stetigkeit von charakteristischen Wurzeln in
  Abh\"angigkeit von den Matrizenelementen.} Jahresbericht der Deutschen
  Mathematiker-Vereinigung 60, 40--42.

\bibitem[{Phillips(1990)}]{Phillips1990165}
Phillips, D., 1990. Improving spectral-variation bounds with chebyshev
  polynomials. Linear Algebra and its Applications 133~(0), 165 -- 173.

\bibitem[{Rivlin(2003)}]{Rivlin1981}
Rivlin, T., 2003. An Introduction to the Approximation of Functions. Dover
  Books on Mathematics Series. Dover Publications.

\bibitem[{Sarason(1967)}]{Sarason}
Sarason, D., 1967. Generalized interpolation in h∞. Transactions of the
  American Mathematical Society 127~(2), pp. 179--203.

\bibitem[{Stewart and Sun(1990)}]{StewartSun}
Stewart, G., Sun, J.-G., 1990. Matrix Perturbation Theory. Computer science and
  scientific computing. Academic Press.

\bibitem[{Szehr(2014)}]{OlegRes}
Szehr, O., 2014. Eigenvalue estimates for the resolvent of a non-normal matrix.
  EMS Journal of Spectral Theory 4~(4), 783 -- 813.

\bibitem[{Szehr and Zarouf(2015)}]{Rachich}
Szehr, O., Zarouf, R., 2015. The maximum of the resolvent over matrices with
  given spectrum. ArXiv:1501.07007.

\bibitem[{Sz{\H{o}}kefalvi-Nagy and Foia\c{s}(1968)}]{NFinter}
Sz{\H{o}}kefalvi-Nagy, B., Foia\c{s}, C., 1968. Commutants de certains
  op\'{e}rateurs. Acta Scientiarum Mathematicarum (Szeged) 29~(1-2), 1--17.

\bibitem[{Sz{\H{o}}kefalvi-Nagy et~al.(2010)Sz{\H{o}}kefalvi-Nagy, Nagy,
  Foia\c{s}, Bercovici, and K{\'e}rchy}]{Nagy}
Sz{\H{o}}kefalvi-Nagy, B., Nagy, B., Foia\c{s}, C., Bercovici, H., K{\'e}rchy,
  L., 2010. Harmonic Analysis of Operators on Hilbert Space. Universitext
  (Berlin. Print). Springer.

\bibitem[{Tsang(2012)}]{Tsang2012}
Tsang, C.-y., 2012. Finite blaschke products versus polynomials. Ph.D. thesis,
  The University of Hong Kong (Pokfulam, Hong Kong).

\bibitem[{Whittaker and Watson(1996)}]{WatsonWhit}
Whittaker, E., Watson, G., 1996. A Course of Modern Analysis. A Course of
  Modern Analysis: An Introduction to the General Theory of Infinite Processes
  and of Analytic Functions, with an Account of the Principal Transcendental
  Functions. Cambridge University Press.

\end{thebibliography}

\appendix
\section{Perpendicular projection in the Poincar\'e disk model}
\label{sec:Appendix}

\begin{lemma}
Let $\Gamma$ be a geodesic curve in the Poincar\'{e} disk model of hyperbolic geometry and denote by $p_\Gamma:\mathbb{D}\ra \Gamma$ the perpendicular projection onto $\Gamma$. Then for any $z,w\in\mathbb{D}$ 
\begin{align*}
\Abs{\frac{z-w}{1-\bar{z}w}}\geq \Abs{\frac{p_\Gamma(z)-p_\Gamma(w)}{1-\overline{p_\Gamma(z)}p_\Gamma(w)}}.
\end{align*}
\end{lemma}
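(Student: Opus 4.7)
My plan is to reduce the problem to the upper half plane model of hyperbolic geometry, where both the perpendicular projection onto a geodesic and the hyperbolic distance admit particularly simple formulas, and then verify the desired contraction by a short elementary estimate.

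\emph{Reduction to a canonical geodesic.} The pseudo-hyperbolic distance is invariant under hyperbolic isometries of $\mathbb{D}$, and since such isometries are conformal they send geodesics to geodesics and preserve orthogonality. I would therefore apply a M\"obius map $\Phi$ taking $\mathbb{D}$ to the upper half plane $\mathbb{H}$ and sending $\Gamma$ to the positive imaginary axis; because $\Phi$ intertwines the two notions of perpendicular projection, both sides of the inequality are unchanged. In $\mathbb{H}$, the geodesics orthogonal to the imaginary axis are exactly the Euclidean semicircles centered at the origin, so the perpendicular projection of $z=x+iy\in\mathbb{H}$ onto the imaginary axis is simply $i|z|$.

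\emph{Passage to the Riemannian distance and verification.} The pseudo-hyperbolic distance and the Riemannian hyperbolic distance $d_h$ are related by $p(a,b)=\tanh\!\bigl(d_h(a,b)/2\bigr)$, and $\tanh$ is strictly increasing, so it suffices to prove $d_h(z,w)\geq d_h(i|z|,i|w|)$. Using the standard formula $\cosh d_h(z,w)=1+\frac{|z-w|^2}{2\,\mathrm{Im}(z)\,\mathrm{Im}(w)}$ and writing $z=x_1+iy_1$, $w=x_2+iy_2$ and $r_j=|z_j|$, clearing denominators reduces the inequality to
\begin{align*}
r_1 r_2\,|z-w|^2\;\geq\;y_1 y_2\,(r_1-r_2)^2.
\end{align*}
This is immediate from the two elementary inequalities $r_j\geq y_j$ (which yields $r_1 r_2\geq y_1 y_2$) and the reverse triangle inequality $|z-w|\geq |r_1-r_2|$, multiplied together.

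\emph{Main obstacle.} The only delicate point is the reduction in the first step, where I need to confirm that the isometry $\Phi$ truly intertwines the perpendicular projections — i.e.\ that the image of $p_\Gamma(z)$ under $\Phi$ coincides with the perpendicular projection of $\Phi(z)$ onto $\Phi(\Gamma)$. This rests on the classical fact that hyperbolic isometries of $\mathbb{D}$, and hence their composition with the Cayley transform to $\mathbb{H}$, are conformal and carry geodesics to geodesics, so $\Phi$ automatically preserves both geodesic structure and angles. Once this normalization is in place the remainder of the argument is a transparent computation.
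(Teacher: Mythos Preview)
Your argument is correct, and it follows a genuinely different route from the paper's proof. The paper stays inside the disk model: it normalizes so that $p_\Gamma(z)=0$ and $\Gamma=[-1,1]$, then uses the geometry of equidistant curves from the perpendicular geodesic $\Lambda_w$ together with the hyperbolic Pythagorean theorem to sandwich the pseudo-hyperbolic distance of the projections. Your approach instead transfers everything to the upper half plane, where the perpendicular projection onto the imaginary axis has the explicit formula $z\mapsto i|z|$, and then reduces the claim via $p=\tanh(d_h/2)$ and the $\cosh$ formula to the two-line estimate $r_1r_2\,|z-w|^2\geq y_1y_2\,(r_1-r_2)^2$. This is more computational and arguably more self-contained---it avoids appealing to the structure of equidistant curves and to hyperbolic Pythagoras---while the paper's argument is more visibly geometric. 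The reduction step you flagged as the ``main obstacle'' is indeed routine: conformal isometries send geodesics to geodesics and preserve orthogonality, so they intertwine perpendicular projections exactly as you described. (One tiny notational slip: you write $r_j=|z_j|$ without having introduced $z_1,z_2$; clearly $r_1=|z|$ and $r_2=|w|$ is intended.)
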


Due to lack of a reference we provide a proof for this fact.

\begin{proof}
For any point $z\in\mathbb{D}\setminus{\Gamma}$ there is a unique geodesic line $\Lambda_z$ through $z$ intersecting $\Gamma$ in a right angle~\cite[p. 305]{Brannan}. We define the perpendicular projection $p_\Gamma(z)$ as the intersection of $\Lambda_z$ and $\Gamma$.
It is well known that M\"obius transformations map geodesics to geodesics, preserve angles and pseudo-hyperbolic distance in $\mathbb{D}$. We can thus apply a map of the form

$$\zeta\mapsto e^{i\theta}\frac{\zeta-p_\Gamma(z)}{1-\overline{p_\Gamma(z)}\zeta}\quad \theta\in[0,2\pi)$$
to restrict our discussion to 
\begin{align*}
p_\Gamma(z)=0, \quad \Gamma = \lbr -1,1\rbr, \quad \Lambda_z = i\lbr -1,1\rbr.
\end{align*}
Under this map $\Lambda_w$ is mapped to a geodesic that orthogonally intersects $\Gamma$ in $p_\Gamma(w)$, see Figure \ref{fig:purp}.  
\begin{figure}[t]
\centering
\includegraphics[width=6cm]{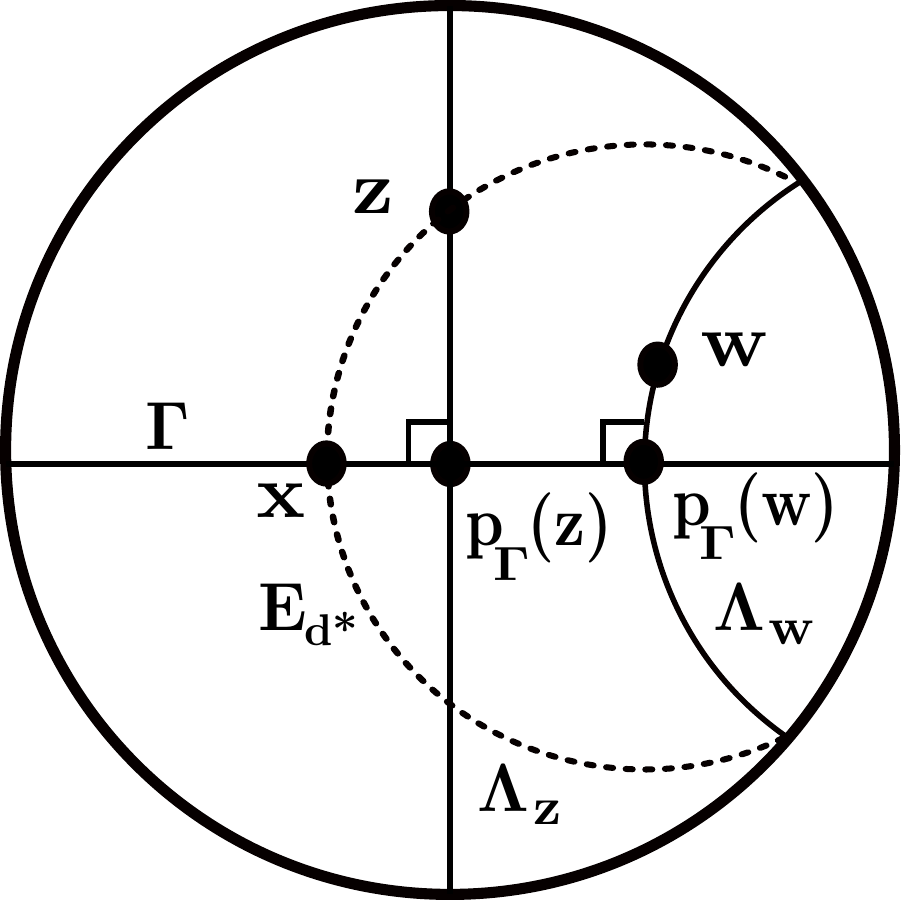}
\caption{Perpendicular projections of $z,w\in\mathbb{D}$ onto the geodesic $\Gamma$. Dashed line is the set of points with distance $d^*$ from the geodesic $\Lambda_w$.}
\label{fig:purp}
\end{figure}
Consider sets of points that are of hyperbolic distance $d$ from $\Lambda_w$
\begin{align*}
E_d = \left\{ y\in\mathbb{D}\hspace*{0.1cm}| \Abs{\frac{y-p_{\Lambda_w}(y)}{1-\overline{y}p_{\Lambda_w}(y)}} = d \right\}.
\end{align*}
It can be shown that these sets are circular arcs intersecting $\partial\mathbb{D}$ in the same points as $\Lambda_w$~\cite[p. 313]{Brannan}. There is $d^*$ with $z\in E_{d^*}$, see Figure \ref{fig:purp}, and by the hyperbolic Theorem of Pythagoras~\cite[p.307]{Brannan} we have
\begin{align*}
d^* \leq \Abs{\frac{z-w}{1-\bar{z}w}}.
\end{align*}
Let $x\in E_{d^*}\cap \lbr -1,1\rbr$ denote the intersection of $E_{d^*}$ with $\Gamma$. By convexity of Euclidean disks it is clear that $x\in\lbr -1,0\rb$. We conclude that
\begin{align*}
\Abs{p_\Gamma(w)}\leq\Abs{\frac{x-p_\Gamma(w)}{1-xp_\Gamma(w)}}=d^*\leq\Abs{\frac{z-w}{1-\bar{z}w}}.
\end{align*}
\end{proof}

\end{document}